\documentclass[12pt]{article}
\usepackage{amsmath,amssymb,amsthm,bbm,natbib,color,hyperref}
\usepackage[margin=1.2in]{geometry}
\usepackage{caption}
\captionsetup[figure]{font=small}
\usepackage{graphicx}
\usepackage{subcaption}
\usepackage{afterpage}
\usepackage[hang,flushmargin]{footmisc}

\newtheorem{theorem}{Theorem}
\newtheorem{lemma}{Lemma}

\newtheorem{definition}{Definition}
\newtheorem{corollary}{Corollary}
\newcommand{\R}{\mathbb{R}}
\newcommand{\eps}{\epsilon}

\newcommand{\EE}[1]{\mathbb{E}\left[{#1}\right]}
\newcommand{\EEst}[2]{\mathbb{E}\left[{#1}\  \middle| \ {#2}\right]}
\newcommand{\Ep}[2]{\mathbb{E}_{{#1}}\left[{#2}\right]}
\newcommand{\Epst}[3]{\mathbb{E}_{{#1}}\left[{#2}\  \middle| \ {#3}\right]}
\newcommand{\PP}[1]{\mathbb{P}\left\{{#1}\right\}}
\newcommand{\PPst}[2]{\mathbb{P}\left\{{#1}\  \middle| \ {#2}\right\}}
\newcommand{\Ppst}[3]{\mathbb{P}_{{#1}}\left\{{#2}\  \middle| \ {#3}\right\}}
\newcommand{\Pp}[2]{\mathbb{P}_{{#1}}\left\{{#2}\right\}}

\newcommand{\One}[1]{{\mathbbm{1}}\left\{{#1}\right\}}
\newcommand{\one}[1]{{\mathbbm{1}}_{{#1}}}
\newcommand{\iidsim}{\stackrel{\textnormal{iid}}{\sim}}

  \newcommand\independent{\protect\mathpalette{\protect\independenT}{\perp}}
\def\independenT#1#2{\mathrel{\rlap{$#1#2$}\mkern2mu{#1#2}}}

\DeclareMathOperator*{\argmin}{argmin}

\newcommand{\Fcal}{\mathcal{F}}
\newcommand{\Xcal}{\mathcal{X}}
\newcommand{\Rcal}{\mathcal{R}}

\newcommand{\Ecal}{\mathcal{E}}
\newcommand{\Qcal}{\mathcal{Q}}
\newcommand{\Lcal}{\mathcal{L}}
\newcommand{\Ch}{\widehat{C}}
\newcommand{\leb}{\textnormal{leb}}
\newcommand{\ba}{\mathbf{a}}
\newcommand{\bt}{\mathbf{t}}
\newcommand{\bpi}{\boldsymbol{\pi}}
\newcommand{\dtv}{\textnormal{d}_{\textnormal{TV}}}

\newcommand{\edit}[1]{{{#1}}}

\title{Is distribution-free inference possible \\ for binary regression?}
\author{Rina Foygel Barber}
\date{}

\begin{document}
\maketitle

\begin{abstract}
For a regression problem with a binary label response, 
we examine the problem of constructing confidence intervals for the label probability conditional on the features.
In a setting where we do not have any information about the underlying distribution,
we would ideally like to provide confidence intervals 
that are distribution-free---that is, valid with no assumptions on the distribution of the data.
Our results establish an explicit lower bound on the length of any distribution-free confidence interval, and
 construct a procedure that can approximately achieve this length. 
In particular, this lower bound is independent of the sample size and holds for all distributions
with no point masses, meaning that it is not possible for any
 distribution-free procedure to be adaptive with respect to any type of special structure in the distribution.
\end{abstract}
\section{Introduction}
Consider a regression problem where we would like to model the relationship between a feature vector $X\in\R^d$
and a response $Y\in\R$, based on a sample of $n$ data points, $(X_1,Y_1),\dots,(X_n,Y_n)\iidsim P$.
 In a high-dimensional setting where $d$ is large, 
many modern methods are available to build powerful predictive models for $Y$ given $X$, but relatively little
is known about their theoretical properties---for example, if we train a neural network on the $n$ available data points,
can we quantify its accuracy on unseen test data, without making strong assumptions on $P$, the unknown distribution 
of the data? 

If we are willing to assume that the data follows a regression model $ \Epst{P}{Y}{X=x} = f(x)$
where the function $f$ satisfies certain assumptions, 
then classical statistical results assure that these questions can be answered using more simple regression methods.
For example, if $f(x)$ lies in a parametric family (e.g., linear regression) then we can perform inference within this parametric 
model. In a more general nonparametric setting, if $f(x)$ is assumed to satisfy some smoothness
conditions, classical nonparametric methods such as nearest neighbors will also yield guarantees on the accuracy
of our estimate of $f(x)$. 
However, the reported results will be invalid if the assumptions (the parametric model, or the 
smoothness conditions) do not hold. 

To address this concern, the recent field of {\em distribution-free prediction} considers the problem of providing
valid predictive inference without any assumptions on the data distribution.
 The aim of distribution-free prediction is formulated as follows: given a training data
set $(X_1,Y_1),\dots,(X_n,Y_n)\in\R^d\times\R$, our task is to
construct a map $\Ch_n$, mapping a new data point $x\in\R^d$
to an interval or set $\Ch_n(x)\subseteq\R$, such that
\begin{equation}\label{eqn:distr_free_prediction}\Pp{(X_i,Y_i)\iidsim P}{Y_{n+1}  \in \Ch_n(X_{n+1})} \geq 1-\alpha\textnormal{\quad for
 all distributions $P$ on $\R^d\times\R$}.\end{equation}
Here the probability is taken with respect to $(X_i,Y_i)\iidsim P$ for $i=1,\dots,n+1$ (the training and test data
are drawn from the same distribution $P$). The bound is required to hold uniformly over all distributions $P$, without
constraining to, say, distributions that satisfy some notion of smoothness.
For example, the conformal inference methodology \citep{vovk2005algorithmic} provides an elegant framework
for distribution-free prediction, and can adapt to the favorable properties of the underlying distribution
to achieve asymptotically optimal prediction intervals in certain settings (see, e.g., \citet{lei2014distribution,lei2018distribution}).

Distribution-free
prediction has also been studied in the context of a binary response $Y\in\{0,1\}$,
where the output is a set $\Ch_n(X_{n+1})\subseteq\{0,1\}$ (or more generally, in a setting with a finite
set of possible labels) \citep{vovk2005algorithmic,lei2014classification,sadinle2019least}. For a binary $Y$, the goal is to satisfy
\begin{equation}\label{eqn:distr_free_prediction_binary}\Pp{(X_i,Y_i)\iidsim P}{Y_{n+1}  \in \Ch_n(X_{n+1})} \geq 1-\alpha\textnormal{\quad for
 all distributions $P$ on $\R^d\times\{0,1\}$}.\end{equation}
For distributions $P$ where the conditional probability $\pi_P(X) = \Ppst{P}{Y=1}{X}$ is typically close to either 0 or 1,
given sufficient data the resulting distribution-free predictive set $\Ch_n(X_{n+1})$ can often be a singleton set, $\{0\}$ or $\{1\}$.
If the labels are inherently noisy, however---that is, if $\pi_P(X)$ is typically bounded away from both 0 and 1---then 
$\{0,1\}$ will often be the only possible set offering guaranteed predictive coverage, even if we were to have oracle
knowledge of the distribution $P$. In other words, predictive coverage (whether distribution-free or not)
is not a meaningful target for binary regression problems
with noisy labels; we would like to estimate the label probability $\pi_P(X)$ directly, rather than try to predict the inherently noisy label $Y$.

\subsection{Summary of contributions}
In this work, we ask whether the distribution-free framework can be extended beyond the prediction task,
in the binary regression setting. We will aim to
 provide distribution-free inference on the conditional label probability $\pi_P(X)=\Ppst{P}{Y=1}{X}$.
We are particularly interested in scenarios where $\pi_P(X)$ is typically not close to either 0 or 1 and so meaningful predictive inference 
would not be possible even if $P$ were known.
In this type of setting,
is it nonetheless possible to provide a nontrivial confidence interval for $\pi_P(X)$, and to 
ensure a distribution-free guarantee of coverage?

Specifically, our goal is to investigate the feasibility of constructing an algorithm that satisfies the following condition:
\begin{definition}\label{def:distr_free_confidence_binary}
An algorithm $\Ch_n$ provides a $(1-\alpha)$-distribution-free confidence interval for binary regression if it holds that
\begin{equation}\label{eqn:distr_free_confidence_binary}\Pp{(X_i,Y_i)\iidsim P}{\pi_P(X_{n+1})  \in \Ch_n(X_{n+1})} \geq 1-\alpha\textnormal{\ for
 all distributions $P$ on $\R^d\!\times\! \{0,1\}$}.\end{equation}
\end{definition}
\noindent This notion of a valid distribution-free confidence interval was previously studied by \citet[Section 5.2]{vovk2005algorithmic}, under
the name ``weakly valid probability estimators''.

Definition~\ref{def:distr_free_confidence_binary} requires a fairly weak form of coverage---we ask
that coverage holds on average over the new feature vector $X_{n+1}$, rather than requiring $\PP{\pi_P(x)\in\Ch_n(x)}\geq 1-\alpha$ to hold
uniformly over all $x\in\R^d$. Nonetheless, the main results of our work establish
that even this weak notion of distribution-free coverage is fundamentally incompatible with the goal of precise inference;
with some caveats, the main message of our results is that
the property~\eqref{eqn:distr_free_confidence_binary} can only be attained by algorithms $\Ch_n$ that return confidence intervals
whose length does not vanish with the sample size $n$.
To make this more precise, our main results are the following:
\begin{itemize}
\item {\bf Distribution-free confidence leads to distribution-free prediction.}
In Theorem~\ref{thm:prediction}, we prove that
any algorithm $\Ch_n$ satisfying~\eqref{eqn:distr_free_confidence_binary}
will inevitably also yield a valid prediction interval for $Y_{n+1}$, for any nonatomic distribution $P$,
i.e., $P$ has no point masses. This result is closely related to \citet[Proposition 5.1]{vovk2005algorithmic},
where it is shown that $\Ch_n$ must include the endpoints $0$ and/or $1$ with large probability. 
Intuitively, this implies that, in a noisy setting where $\pi_P(X)$ is not typically close to 0 or 1,
any distribution-free confidence interval $\Ch_n(X_{n+1})$ is likely to be quite wide since it needs to
reach one or both endpoints. 
\item {\bf A lower bound on the length of a distribution-free confidence interval.}
In Theorem~\ref{thm:lowerbd}, we formalize the above intuition, establishing a lower bound on the expected length
of $\Ch_n(X_{n+1})$ with an explicit function of the distribution of $\pi_P(X)$
(again, for any nonatomic $P$). Importantly, this lower bound is independent of the sample size $n$. In other words,
for any fixed nonatomic distribution $P$, the length of our distribution-free confidence intervals cannot go to zero
even as $n\rightarrow\infty$. This means that distribution-free confidence intervals cannot be adaptive---by requiring
coverage to hold for {\em all} distributions $P$, we no longer have the possibility of providing precise confidence
intervals for {\em any} distribution $P$, regardless of whether $\pi_P$ satisfies ``nice'' conditions such as smoothness.
\item {\bf A matching upper bound.}
In Theorem~\ref{thm:upperbd} we propose a concrete construction
for $\Ch_n(X_{n+1})$ that satisfies the distribution-free coverage property~\eqref{eqn:distr_free_confidence_binary}.
In particular, Corollary~\ref{cor:upperbd} proves that the length of our proposed algorithm
is asymptotically equal to the lower bound established in Theorem~\ref{thm:lowerbd},
 for any distribution $P$ where it is possible to estimate $\pi_P(X)$ consistently as $n\rightarrow\infty$.
\end{itemize}

\subsubsection{Fixed vs.~random intervals}
In some cases, we may want to allow additional randomness in our construction (formally,
we would define $\Ch_n$  as mapping a new feature vector $x$ to a distribution over subsets of $\R$,
and $\Ch_n(x)$ denotes a subset drawn from this distribution). In this
case, the probability in statements such as~\eqref{eqn:distr_free_prediction},~\eqref{eqn:distr_free_prediction_binary}, and~\eqref{eqn:distr_free_confidence_binary}
should be interpreted as being taken with respect to the distribution of the data $(X_i,Y_i)\iidsim P$ and the additional randomness in the construction of $\Ch_n(X_{n+1})$.
From this point on, we will assume that probabilities and expectations are taken on average over any randomness in the construction
of the relevant prediction or confidence interval, without further comment. In particular, all results proved in this paper apply 
to both fixed and random intervals.

\subsection{Related work}
As mentioned above, the problem of valid distribution-free confidence intervals was previously studied
by \citet[Section 5.2]{vovk2005algorithmic}.
In addition, this problem is closely related to two lines of work in the recent statistical literature---nonparametric
inference (specifically, confidence intervals for nonparametric regression), and distribution-free prediction.
\paragraph{Nonparametric confidence intervals} Suppose the response variable $Y$ 
follows a model $Y=f(X) + \textnormal{noise}$, where $f(X) = \EEst{Y}{X}$ and where the noise
distribution is constrained (e.g.,
subgaussian with some bounded variance).
In this setting, we may assume that the true regression function $f$ lies in some constrained class---for example,
it may be constrained to be Lipschitz, or to have a Lipschitz gradient (corresponding to a smoothness assumption
with exponent $\beta=1$ or $\beta=2$, respectively). There is a rich literature on the problems of estimating $f(x)$,
and providing inference (e.g., confidence bands) for $f(x)$. \edit{If the smoothness level $\beta>0$} is known, then the problem is fairly straightforward---for example,
a k-nearest neighbors method with $k\sim n^{2\beta/(2\beta+d)}$ yields the optimal estimation error rate $\mathcal{O}(n^{-\beta/(2\beta+d)})$,
ignoring log factors (and, correspondingly, confidence intervals of this length)~\citep{low1997nonparametric,gyorfi2006distribution}.

However, a key question of interest is that of {\em adaptivity}---if $\beta$ is unknown but is assumed to satisfy $\beta\geq \beta_0$
is it possible to construct confidence intervals that are valid at smoothness level $\beta_0$, but if applied to data
with smoothness $\beta>\beta_0$ would still achieve the optimal rate at that $\beta$? 
This question has been studied extensively in the literature---see, e.g., \citet{low1997nonparametric,genovese2008adaptive,cai2014adaptive}
and the references therein. It turns out that the question of adaptivity is closely tied to how we choose to define
coverage---if we require \edit{coverage at a given point $x_0$, i.e., a confidence interval for $f(x_0)$ at a fixed $x_0$,
then adaptivity is impossible \citep{low1997nonparametric}, while relaxing to nearly-uniform coverage (coverage of $f(x)$ for ``most'' points $x$)} allows for adaptivity
up to $\beta\leq 2\beta_0$ \citep{cai2014adaptive}; a bootstrap based approach for nearly-uniform coverage
is studied also by \citet{hall2013simple}. \edit{Adaptivity in the regime $\beta>2\beta_0$ can be obtained by excluding certain regions of the function space---for instance,
under the assumption that the function $f$ is either $\beta$-smooth, or is $\beta_0$-smooth and is sufficiently far from any $\beta$-smooth function, 
it becomes possible to detect the correct smoothness level of $f$ and construct the confidence band accordingly.
Results of this type are studied by \citet{carpentier2015testing,szabo2015frequentist,picard2000adaptive,hoffmann2011adaptive,gine2010confidence,bull2013adaptive}.
(An overview of many of these results can be found in \citet[Section 8.3]{gine2016mathematical}.)}

A different relaxation of the coverage condition is coverage  on average over
a random draw of $X$, studied by \citet{wahba1983bayesian}, which is similar to the coverage condition~\eqref{eqn:distr_free_confidence_binary}
studied in this work. \citet{genovese2008adaptive} propose a different relaxation, providing confidence intervals guaranteed
to cover a ``surrogate'' of the function $f$ (a smoothed version of the regression function). A different notion of coverage is the ``confidence ball'',
guaranteeing a bound $\ell_2$ error $\int_x (\widehat{f}(x) - f(x))^2\;\mathsf{d}x$ rather than providing pointwise confidence intervals for $f(x)$ at each $x$
(see, e.g., \edit{\citet{li1989honest,cai2006adaptive}}).

\paragraph{Distribution-free prediction} The field of distribution-free
prediction aims to provide prediction intervals that are uniformly valid over all distributions, without assuming
some minimum level of smoothness as in the nonparametric inference literature. 
As mentioned above, the conformal prediction framework \citep{vovk2005algorithmic} provides 
methodology for this aim.
Alternative methods offering distribution-free predictive guarantees include holdout set methods (also known 
as ``split'' or ``inductive'' conformal prediction, see, e.g., \citet{papadopoulos2002inductive,vovk2005algorithmic,papadopoulos2008inductive,lei2018distribution}),
and the jackknife+ \citep{barber2019predictive}, a variant of the jackknife (i.e., leave-one-out cross-validation).
\citet{lei2014distribution}  establish that distribution-free prediction is possible while (approximately) achieving 
the minimum possible length prediction
intervals for any ``nice'' (e.g., smooth) distribution $P$. 

The work of \citet{vovk2012conditional,lei2014distribution,barber2019limits} study whether a stronger form of predictive
coverage can be attained---namely, distribution-free conditional coverage, aiming for a guarantee that holds pointwise
at (almost) every $x$, i.e.,
$\PPst{Y_{n+1}\in\Ch_n(X_{n+1})}{X_{n+1}=x}\geq 1-\alpha$.
Distribution-free pointwise coverage is shown to be impossible for any finite-length
interval \citep{vovk2012conditional,lei2014distribution}. \citet{barber2019limits} study a weaker notion of conditional coverage,
aiming to ensure  $\PPst{Y_{n+1}\in\Ch_n(X_{n+1})}{X_{n+1}\in\Xcal}\geq 1-\alpha$ for all sufficiently large subsets $\Xcal\in\R^d$,
and prove lower bounds on the length of any resulting distribution-free interval. Some of the techniques in these lower
bounds are related to the proof techniques we use in the present work.

In the setting where the response $Y$ is binary or takes finitely many values, as discussed earlier,
\citet{vovk2005algorithmic,lei2014classification,sadinle2019least} apply the conformal prediction
framework to the problem of distribution-free classification. If the goal is to estimate label probabilities (rather than output a predictive set),
an alternative notion of validity in the binary setting is {\em calibration}, where for an estimate $\widehat{\pi}(X)$
of the label probability $\pi_P(X)$, 
we require
$\PPst{Y=1}{\widehat{\pi}(X)} = \widehat{\pi }(X)$. This framework is studied in the distribution-free
setting by \citet{vovk2014venn} via the methodology of Venn predictors, and more recently by \citet{gupta2020distribution}.

\section{Main results: lower bounds}
In this section, we will prove that a distribution-free confidence interval for binary regression cannot 
provide precise inference about the parameter $\pi_P(X)$. To do this, we first compare to the problem of predictive inference, and then
turn to proving lower bounds on the length of any distribution-free confidence interval.

\subsection{Confidence vs.~prediction}
Our first main result proves that, in the binary regression setting, any algorithm providing distribution-free coverage
of $\pi_P(X)$ must necessarily also cover the binary label $Y$, for every distribution $P$ that is nonatomic (i.e., zero probability
at any single point).
\begin{theorem}\label{thm:prediction}
Let $\Ch_n$ be any algorithm that provides a $(1-\alpha)$-distribution-free confidence interval for
binary regression, as in~\eqref{eqn:distr_free_confidence_binary}.
Then $\Ch_n$ also satisfies $(1-\alpha)$ predictive coverage uniformly over all nonatomic distributions $P$. That is, $\Ch_n$
satisfies
\[\Pp{(X_i,Y_i)\iidsim P}{Y_{n+1}  \in \Ch_n(X_{n+1})} \geq 1-\alpha\textnormal{\ for
 all nonatomic distributions $P$ on $\R^d\!\times\!\{0,1\}$}.\]
\end{theorem}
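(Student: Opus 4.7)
The plan is to reduce predictive coverage for a general nonatomic $P$ to the essentially trivial case of distributions whose conditional label probability already lies in $\{0,1\}$. The key observation is that if $P^\star$ is any distribution on $\mathbb{R}^d\times\{0,1\}$ with $\pi_{P^\star}(X)\in\{0,1\}$ almost surely---equivalently, $Y=g(X)$ almost surely for some measurable $g:\mathbb{R}^d\to\{0,1\}$---then $\pi_{P^\star}(X_{n+1})=Y_{n+1}$ a.s., so the events $\{\pi_{P^\star}(X_{n+1})\in\Ch_n(X_{n+1})\}$ and $\{Y_{n+1}\in\Ch_n(X_{n+1})\}$ coincide; in particular the confidence guarantee~\eqref{eqn:distr_free_confidence_binary} applied to $P^\star$ already \emph{is} the desired predictive coverage statement for $P^\star$, with no appeal to nonatomicity.

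For a general nonatomic $P$, my plan is to exhibit a probability distribution $\mu$ over measurable functions $g:\mathbb{R}^d\to\{0,1\}$ such that, when $G\sim\mu$ is drawn independently of $X_1,\dots,X_{n+1}\iidsim P_X$, the joint distribution of $(X_i, G(X_i))_{i=1}^{n+1}$ equals $P^{\otimes(n+1)}$. This requires $G$ to be a ``random field'' with $G(x)\sim\mathrm{Bern}(\pi_P(x))$ marginally and with $G(x_1),\dots,G(x_{n+1})$ mutually independent for any distinct $x_1,\dots,x_{n+1}$. Nonatomicity of $P$ (and hence of $P_X$) enters here to guarantee that the $X_i$ are almost surely distinct, so that conditioning on $(X_i)_{i=1}^{n+1}$ yields the correct product-of-Bernoullis structure for $(G(X_i))$. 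Given such a $G$, the induced distribution $P_G$ (with $X\sim P_X$, $Y=G(X)$) is of the deterministic form covered by the first observation, so applying the confidence guarantee to $P_G$ gives
\[
\PPst{G(X_{n+1})\in\Ch_n(X_{n+1})}{G} \;\geq\; 1-\alpha ;
\]
taking the expectation over $G$ and using the mixture identity yields
\[
\Pp{(X_i,Y_i)\iidsim P}{Y_{n+1}\in\Ch_n(X_{n+1})} \;=\; \EE{\PPst{G(X_{n+1})\in\Ch_n(X_{n+1})}{G}} \;\geq\; 1-\alpha,
\]
which is the conclusion.

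The hard part will be making the construction of $G$ fully rigorous: a pointwise-independent Bernoulli random field on $\mathbb{R}^d$ is not realizable as a random element of the space of Borel-measurable functions $\mathbb{R}^d\to\{0,1\}$ (the classical measurability obstruction to i.i.d.\ random fields), so some care is needed in defining $\mu$ so that each realization of $P_G$ is a genuine probability distribution on $\mathbb{R}^d\times\{0,1\}$ to which the confidence guarantee may be applied. I would either enlarge the probability space via a Kolmogorov-extension argument, treating $G$ as an abstract family $(G(x))_{x\in\mathbb{R}^d}$ whose finite-dimensional marginals are all that the coverage calculation actually uses, or approximate $G$ by a measurable piecewise-constant function on a sufficiently fine measurable partition of $\mathbb{R}^d$ (available because $P_X$ is nonatomic) and argue that the $n+1$ sample points a.s.\ fall in distinct cells so that the argument passes to the limit.
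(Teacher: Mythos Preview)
Your overall strategy---reduce to distributions with deterministic labels by randomizing over a measurable function $G:\R^d\to\{0,1\}$, apply the confidence guarantee to each $P_G$, then average---is exactly the idea behind the paper's proof. The paper packages this as Lemma~\ref{keylemma} (your case is $Z_{n+1}=Y_{n+1}$), and its implementation is a clean way to avoid the measurability obstacle you correctly identify: rather than building a random field on all of $\R^d$, draw $M$ i.i.d.\ pairs $(X^{(m)},Y^{(m)})$ from $P$, form the \emph{empirical} distribution $Q_{\Lcal}$ supported on these $M$ points (so $\pi_{Q_{\Lcal}}(X^{(m)})=Y^{(m)}$ once the $X^{(m)}$'s are distinct, which nonatomicity guarantees a.s.), apply the confidence guarantee to $Q_{\Lcal}$, and then compare sampling $n+1$ points with replacement from $Q_{\Lcal}$ to sampling without replacement (TV distance $\leq n^2/M$). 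Marginalizing over $\Lcal$, without-replacement sampling is exactly i.i.d.\ from $P$, and letting $M\to\infty$ finishes. This is morally your ``piecewise-constant on a fine partition and pass to the limit'' plan, executed with finite random supports instead of cells.

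Your first proposed fix, however, does not close as written. The Kolmogorov-extension construction gives a process $(G(x))_{x\in\R^d}$ with the right finite-dimensional marginals, but the realized $G$ is not a Borel-measurable function, so $P_G$ is not a bona fide distribution on $\R^d\times\{0,1\}$ and the hypothesis~\eqref{eqn:distr_free_confidence_binary} cannot be invoked for it. The claim that ``finite-dimensional marginals are all that the coverage calculation actually uses'' is not quite right: for each fixed $(x_1,\dots,x_{n+1})$ the integrand depends only on $(G(x_1),\dots,G(x_{n+1}))$, but the integral ranges over all of $(\R^d)^{n+1}$ and its measurability---and hence the very meaning of $\PPst{G(X_{n+1})\in\Ch_n(X_{n+1})}{G}$---fails without $G$ measurable. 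Your second route (partition, then limit) can be made to work, but the paper's empirical-measure device is the tidy way to carry it out.
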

\noindent For example, consider a distribution $P$ with a constant label probability, $\pi_P(x)\equiv 0.5$.
Given a large sample size $n$, we might hope that our algorithm would detect the simple nature of this distribution, and could
output a narrow interval, $\Ch_n(X_{n+1}) = 0.5\pm \mathrm{o}(1)$. However, Theorem~\ref{thm:prediction} tells us that any
distribution-free confidence interval $\Ch_n$ must necessarily include both endpoints $0$ and $1$ with substantial probability.
In particular, this example suggests that, unless $\pi_P(X)$ is usually close to 0 or 1, any distribution-free confidence
interval $\Ch_n$ is unlikely to be precise (i.e., $\Ch_n(X_{n+1})$ is unlikely to be a short interval). In Theorem~\ref{thm:lowerbd}
below, we will formalize this intuition
by finding a lower bound on the expected length of $\Ch_n(X_{n+1})$.

We note that Theorem~\ref{thm:prediction} is closely related to a result of \citet[Proposition 5.1]{vovk2005algorithmic}
(see also \citet[Theorem 7, Corollary 18]{nouretdinov2001pattern} for an earlier related result).
Their work establishes that if $\Ch_n$ provides a $(1-\alpha)$-distribution-free confidence interval for
binary regression~\eqref{eqn:distr_free_confidence_binary}, then there exists some other algorithm $\widetilde C_n$, 
also satisfying the property~\eqref{eqn:distr_free_confidence_binary},
such that 
\[  \PP{\Ch_n(X_{n+1}) \subseteq(0,1)}\leq \PP{\widetilde C_n(X_{n+1}) = \emptyset} .\]
Clearly this last quantity cannot be larger than $\alpha$ (since this would immediately contradict
the coverage property~\eqref{eqn:distr_free_confidence_binary}), and so
this result, like Theorem~\ref{thm:prediction} above, indicates that $\Ch_n(X_{n+1})$ must often
 include endpoints $0$ and/or $1$.
 While \citet{vovk2005algorithmic}'s result appears different from the conclusion
of Theorem~\ref{thm:prediction} above, the construction in their proof in fact suffices to prove Theorem~\ref{thm:prediction} as well.

\subsubsection{A key lemma}
Rather than proving Theorem~\ref{thm:prediction} directly, we will instead generalize to a more powerful result:
\begin{lemma}\label{keylemma}
Let $\Ch_n$ be any algorithm that provides a $(1-\alpha)$-distribution-free confidence interval for 
binary regression~\eqref{eqn:distr_free_confidence_binary}.
Let $P$ be any nonatomic distribution on $(X,Y)\in\R^d\times\{0,1\}$.
Then for data points $(X_i,Y_i)$ drawn i.i.d.~from $P$ and any random variable $Z_{n+1}\in[0,1]$ satisfying
 \[Z_{n+1}\independent \Ch_n(X_{n+1}) \mid X_{n+1}\textnormal{\quad and \quad}
\EEst{Z_{n+1}}{X_{n+1}} = \pi_P(X_{n+1})\textnormal{ almost surely},\]
it holds that
\[\PP{Z_{n+1}\in\Ch_n(X_{n+1})}\geq 1-\alpha.\]\end{lemma}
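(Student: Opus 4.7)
The plan is to reduce the desired inequality to a direct invocation of the distribution-free coverage property applied to a suitably constructed auxiliary distribution $P^\ast$, where nonatomicity of $P$ provides the room needed to encode $Z_{n+1}$ into a feature-coordinate representation.

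First, I would give $Z_{n+1}$ a concrete functional representation. Since $Z_{n+1}\in[0,1]$ has a prescribed conditional law given $X_{n+1}$, a standard measurable-selection argument (the conditional quantile transform) produces a Borel map $g:\mathbb{R}^d\times[0,1]\to[0,1]$ and a uniform random variable $W\sim\mathrm{Unif}[0,1]$, independent of $X_{n+1}$, $\Ch_n$, and the training data, with $Z_{n+1}=g(X_{n+1},W)$ almost surely. The conditional independence hypothesis in the lemma makes this realization compatible with the joint structure.

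Second, I would use nonatomicity to embed the auxiliary uniform $W$ into the feature coordinate. Since $P_X$ is a nonatomic Borel probability on the standard Borel space $\mathbb{R}^d$, it is measure-isomorphic to $P_X\otimes\mathrm{Unif}[0,1]$, giving a Borel isomorphism (mod null sets) $\eta:\mathbb{R}^d\to\mathbb{R}^d\times[0,1]$ with $\eta_\ast P_X = P_X\otimes\mathrm{Unif}[0,1]$. I then define $P^\ast$ on $\mathbb{R}^d\times\{0,1\}$ by $P^\ast_X=P_X$ and $\pi_{P^\ast}(x)=g(\eta(x))$. By construction, when $X^\ast\sim P_X$, the random variable $\pi_{P^\ast}(X^\ast)=g(\eta(X^\ast))$ has the same marginal law as $Z_{n+1}$ does when $X_{n+1}\sim P_X$.

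Third, since $P^\ast$ is a legitimate distribution on $\mathbb{R}^d\times\{0,1\}$, the distribution-free coverage assumption gives $\mathbb{P}_{P^\ast}\{\pi_{P^\ast}(X_{n+1}^\ast)\in \Ch_n(X_{n+1}^\ast)\}\geq 1-\alpha$. The final step is a coupling that identifies this event with $\{Z_{n+1}\in \Ch_n(X_{n+1})\}$ in the $P$-world: the feature marginal is shared, and $\pi_{P^\ast}(X^\ast_{n+1})\stackrel{d}{=} Z_{n+1}$, while the conditional-independence hypothesis $Z_{n+1}\independent \Ch_n\mid X_{n+1}$ decouples $Z_{n+1}$ from the training-data randomness in exactly the way needed.

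The main obstacle is the matching of training-data distributions under $P^\ast$ and $P$. Under $P^\ast$ the conditional label law is $\mathrm{Bernoulli}(g(\eta(x)))$, a specific number for each $x$, whereas under $P$ it is $\mathrm{Bernoulli}(\pi_P(x))=\mathrm{Bernoulli}(\mathbb{E}[g(X_{n+1},W)\mid X_{n+1}=x])$; these agree only after an average over the embedded uniform, not pointwise. Reconciling this either requires randomizing the choice of $\eta$ so that the induced mixture of conditional label laws equals $\mathrm{Bernoulli}(\pi_P(x))$ (so that the $P^\ast$-training data and the $P$-training data are exchangeable-equivalent to $\Ch_n$), or a careful symmetrization using the conditional independence of $Z_{n+1}$ from $\Ch_n$ to substitute $Z_{n+1}$ for $\pi_{P^\ast}(X_{n+1}^\ast)$ without touching the training-data joint. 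This coupling step, which is precisely where the nonatomicity of $P$ is indispensable, is where I expect the bulk of the technical effort to lie.
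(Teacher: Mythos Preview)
Your proposal has the right high-level idea—build an auxiliary distribution in which the label probability at the test point is $Z_{n+1}$ rather than $\pi_P(X_{n+1})$, then invoke the distribution-free guarantee—but the specific construction via a measure isomorphism $\eta:\R^d\to\R^d\times[0,1]$ has a structural gap that goes beyond the training-data mismatch you flag at the end.

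The problem is at the test point itself. Write $\eta(x)=(\Phi(x),\Psi(x))$. To get $\pi_{P^\ast}(X^\ast_{n+1})=g(\eta(X^\ast_{n+1}))$ to coincide with $Z_{n+1}=g(X_{n+1},W)$, the natural coupling sets $\Phi(X^\ast_{n+1})=X_{n+1}$ and $\Psi(X^\ast_{n+1})=W$, i.e.\ $X^\ast_{n+1}=\eta^{-1}(X_{n+1},W)$. But then $X^\ast_{n+1}\neq X_{n+1}$ in general, so $\Ch_n(X^\ast_{n+1})\neq \Ch_n(X_{n+1})$: the confidence interval in the $P^\ast$-world is evaluated at a different feature vector than in the $P$-world, and the event $\{\pi_{P^\ast}(X^\ast_{n+1})\in\Ch_n(X^\ast_{n+1})\}$ is not the event $\{Z_{n+1}\in\Ch_n(X_{n+1})\}$. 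You cannot make $\eta$ preserve the first coordinate (so that $\Phi=\mathrm{id}$) while still having $\Psi(X)$ uniform and independent of $X$ for a deterministic map. So the ``identification'' you describe in the third step does not go through, even before worrying about the training data. The conditional-independence hypothesis you invoke governs how $Z_{n+1}$ relates to $\Ch_n(X_{n+1})$; it says nothing about the relationship between $\pi_{P^\ast}(X^\ast_{n+1})$ and $\Ch_n(X^\ast_{n+1})$, which is a deterministic function of $X^\ast_{n+1}$.

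The paper's argument avoids this by never trying to build a single global $P^\ast$. Instead it draws a large i.i.d.\ list $\Lcal=\{(X^{(m)},Z^{(m)})\}_{m=1}^M$ from the joint law of $(X_{n+1},Z_{n+1})$, then forms the \emph{random empirical} distribution $Q_\Lcal$ supported on these $M$ points with $\pi_{Q_\Lcal}(X^{(m)})=Z^{(m)}$ (distinctness of the $X^{(m)}$, guaranteed by nonatomicity, is exactly what makes this well-defined). The distribution-free guarantee is applied to each realization of $Q_\Lcal$. Crucially, sampling $n{+}1$ points from $\Lcal$ without replacement gives a sequence whose $(X_i,Y_i)$-marginals are exactly i.i.d.\ from $P$ once $\Lcal$ is integrated out (because $Y\mid X,Z\sim\mathrm{Bernoulli}(Z)$ and $\EE{Z\mid X}=\pi_P(X)$), and whose $(X_{n+1},Z_{n+1})$ is exactly the pair in the lemma. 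The only discrepancy is between with- and without-replacement sampling from $\Lcal$, controlled by a total-variation bound $n^2/M\to 0$. This sidesteps both of your obstacles simultaneously: the feature point fed to $\Ch_n$ is literally $X_{n+1}$, and the training law is exactly $P$ (after averaging over $\Lcal$), with only a vanishing coupling error.
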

\noindent (Proofs for this lemma and all subsequent theoretical results 
are deferred to the Appendix.)

With this lemma in place, Theorem~\ref{thm:prediction} follows immediately---namely,
defining $Z_{n+1}=Y_{n+1}$, we have proved the theorem. 
The lemma is substantially more general, however, and we will need
its full generality in order to prove our lower bounds on the length of $\Ch_n(X_{n+1})$ below.

\subsection{A lower bound on length}
Next, we will establish bounds on the length of a distribution-free confidence interval for binary regression.
We begin with a few definitions. First, for $t\in[0,\frac{1}{2}]$ and $a\in[0,1]$, we define
\[\ell(t,a) = \begin{cases}
2(1-a)t,& a\geq\frac{1}{2},\\
\frac{t}{2a}, &a\geq t \textnormal{ and }0<a<\frac{1}{2},\\
1-\frac{a}{2t}, &a <t,\\
0,&a = t=0,\end{cases}\]
and for $t\in(\frac{1}{2},1]$ let $\ell(t,a) = \ell(1-t,a)$.
The function $\ell(t,a)$ is illustrated in Figure~\ref{fig:ell_t_a_plots}.
To understand the role of this function in our work, we begin with the following lemma:
\begin{lemma}\label{lem:ell_is_inf}For any $t,a\in[0,1]$, define
\[\Fcal_{t,a} = \left\{\textnormal{\begin{tabular}{c}Measurable functions $f:[0,1]\rightarrow[0,1]$ satisfying\\ $\EE{f(Z)}\geq 1-a$ for any 
 random variable $Z\in[0,1]$ with $\EE{Z}=t$\end{tabular}}\right\}.\]
 Then it holds that
\[\ell(t,a) = \inf_{f\in\Fcal_{t,a}}\left\{\int_{s=0}^1 f(s)\;\mathsf{d}s \right\}.\]
\end{lemma}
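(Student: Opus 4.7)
My plan is to reduce the infimum to a convex-analytic optimization and then solve it explicitly by matching upper and lower bounds across three regimes, one for each branch in the definition of $\ell(t,a)$.

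First I would reduce to convex $f$. For any measurable $f\colon[0,1]\to[0,1]$, define
\[
h_f(s) \;=\; \inf\bigl\{\EE{f(Z)} : Z\in[0,1],\ \EE{Z}=s\bigr\}.
\]
An elementary mixing argument (combining optimal witnesses for $s_1$ and $s_2$ via a Bernoulli coin) shows $h_f$ is convex on $[0,1]$ with values in $[0,1]$, and $h_f\leq f$ pointwise (take $Z=s$ a.s.). By Jensen's inequality, whenever $f\in\Fcal_{t,a}$ so is $h_f$: for any $Z'$ with $\EE{Z'}=t$, $\EE{h_f(Z')}\geq h_f(\EE{Z'}) = h_f(t)\geq 1-a$. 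Thus replacing $f$ by $h_f$ does not increase the objective, and the infimum in the lemma equals
\[
\inf\bigl\{\textstyle\int_0^1 h(s)\,\mathsf{d}s : h\colon[0,1]\to[0,1]\ \text{convex},\ h(t)\geq 1-a\bigr\}.
\]
By the symmetry $\ell(t,a)=\ell(1-t,a)$ (paired with $s\mapsto 1-s$), I may assume $t\in[0,\tfrac{1}{2}]$.

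I would next prove the upper bound by exhibiting an explicit convex $h$ attaining $\ell(t,a)$ in each regime. When $a\geq\tfrac{1}{2}$ take $h(s)=2(1-a)\max\{0,\,1-s/(2t)\}$: it has $h(0)=2(1-a)\leq 1$, $h(t)=1-a$, vanishes on $[2t,1]$, and integrates to $2(1-a)t$. When $t\leq a<\tfrac{1}{2}$ take $h(s)=\max\{0,\,1-as/t\}$: it has $h(0)=1$, $h(t)=1-a$, vanishes on $[t/a,1]$, and integrates to $t/(2a)$. When $a<t$ take the linear $h(s)=1-as/t$: it has $h(0)=1$, $h(1)=1-a/t\in[0,1]$, $h(t)=1-a$, and $\int_0^1 h = 1-a/(2t)$.

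For the matching lower bound I would invoke the subgradient inequality. Given any admissible convex $h$, pick a subgradient $m$ of $h$ at $t$; then $h(s)\geq(1-a)+m(s-t)$ for all $s$, so combined with the pointwise bound $h\geq 0$,
\[
\int_0^1 h(s)\,\mathsf{d}s \;\geq\; \int_0^1 \max\bigl\{0,\,(1-a)+m(s-t)\bigr\}\,\mathsf{d}s.
\]
Evaluating the subgradient inequality at $s=0$ and $s=1$, together with $h\leq 1$, forces $m\in[-a/t,\,a/(1-t)]$. I would then compute the right-hand side as a piecewise-smooth function of $m$ and minimize over this interval: the minimum is attained at the interior critical point $m=-(1-a)/t$ when $a\geq\tfrac{1}{2}$, giving $2(1-a)t$, and at the endpoint $m=-a/t$ when $a<\tfrac{1}{2}$, giving $t/(2a)$ when the truncation $(1-a)+m(s-t)=0$ falls inside $[0,1]$ (iff $a\geq t$) and $1-a/(2t)$ when it does not ($a<t$). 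The $m>0$ branch only produces the weaker bound $2(1-a)(1-t)\geq\ell(t,a)$, so the overall minimum matches the upper bound in each regime.

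The main obstacle is precisely this case analysis for the lower bound: the allowable range $[-a/t,\,a/(1-t)]$ of subgradients interacts with whether the truncation threshold $t+(1-a)/|m|$ lies inside $[0,1]$, producing three distinct behaviors that mirror the three branches of $\ell$. Verifying agreement at the boundary values $a=\tfrac{1}{2}$ and $a=t$, together with the degenerate case $t=0$ (where only $Z\equiv 0$ is feasible, the constraint reduces to $f(0)\geq 1-a$, and $\ell(0,a)=0$ is achieved by $f=(1-a)\mathbf{1}_{\{0\}}$), rounds out the argument.
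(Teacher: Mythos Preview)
Your argument is correct and takes a genuinely different route from the paper's. The upper-bound witnesses you write down are exactly the functions $f_{t,a}$ the paper uses, so the two proofs coincide there. For the lower bound, however, the paper does not convexify: it instead tests $f$ against a small explicit family of distributions $\Qcal$ (mixtures of a point mass at $0$ or $1$ with a uniform on a subinterval), picking in each regime a specific $Q\in\Qcal$ with $\Ep{Q}{Z}=t$ and reading off $\int f \geq \ell(t,a)$ directly from the constraint $\Ep{Q}{f(Z)}\geq 1-a$. Your approach replaces these test distributions by their convex-analytic dual: the reduction $f\mapsto h_f$ (the lower convex envelope of $f$, since $h_f(s)=\inf\{\EE{f(Z)}:\EE{Z}=s\}$) shows the infimum is attained among convex $h$ with $h(t)\geq 1-a$, after which the supporting-line bound and the slope constraint $m\in[-a/t,\,a/(1-t)]$ do the work. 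The case split you describe does line up correctly with the three branches of $\ell$ (interior critical slope $|m|=(1-a)/t$ when $a\geq\tfrac12$; endpoint $|m|=a/t$ with truncation when $t\leq a<\tfrac12$; endpoint $|m|=a/t$ without truncation when $a<t$), and the $m>0$ side is indeed dominated since $t\leq\tfrac12$.

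One trade-off worth noting: the paper actually proves a strictly stronger statement (its Lemma~\ref{lem:ell_is_inf_strong}), namely that the same infimum is obtained over the larger class $\Fcal^*_{t,a}$ where the constraint $\EE{f(Z)}\geq 1-a$ is imposed only for $Z\sim Q\in\Qcal$, and over the smaller class $\Fcal^+_{t,a}$ where it is imposed for all $Z$ with $\EE{Z}$ on the same side of $\tfrac12$ as $t$. Both strengthenings are used later (the $\Fcal^*$ direction in the proof of Theorem~\ref{thm:lowerbd}, the $\Fcal^+$ direction in Lemma~\ref{lem:generic_a_t}). Your convexification argument delivers the $\Fcal^+$ direction for free (your convex witnesses are monotone on the relevant side, so Jensen still applies), but it does not immediately give the $\Fcal^*$ direction: if $f$ is only assumed to lie in $\Fcal^*_{t,a}$, the envelope $h_f(t)$ need not be $\geq 1-a$, since the infimum defining $h_f$ ranges over \emph{all} mean-$t$ distributions, not just those in $\Qcal$. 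So while your proof of the stated lemma is complete and arguably cleaner, the paper's test-distribution approach buys the extra uniformity it needs downstream.
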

\noindent Next, for any distribution $Q$ on $[0,1]$ and any $\alpha\in[0,1]$, define 
\[L_\alpha(Q) = \inf_{\substack{\textnormal{Measurable fns.}\\a:[0,1]\rightarrow[0,1]}}\Big\{\Ep{T\sim Q}{\ell(T,a(T))} : \Ep{T\sim Q}{a(T)} \leq \alpha\Big\}.\]
In the following theorem, we will see that this function allows us to explicitly characterize
lower and upper bounds for the length of any distribution-free confidence interval.

\begin{figure}[t]
\centering
\includegraphics[width=\textwidth]{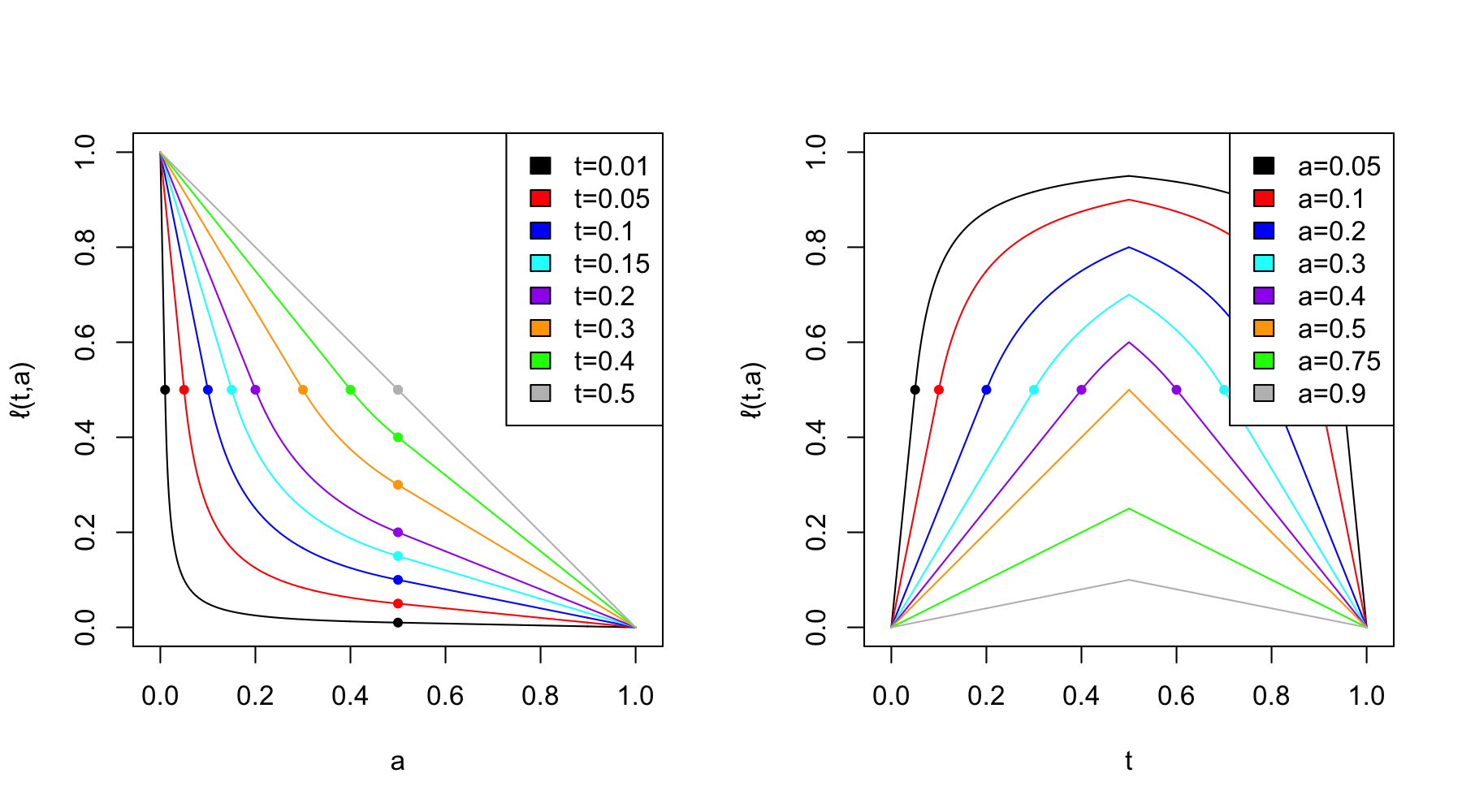}
\caption{Illustration of the function $\ell(t,a)$. The dots indicate the points 
where the function switches between the three cases in its definition (i.e., $a=\frac{1}{2}$ and $a=\min\{t,1-t\}$).
}
\label{fig:ell_t_a_plots}
\end{figure}

\begin{theorem}\label{thm:lowerbd}
Let $\Ch_n$ be any algorithm that provides a $(1-\alpha)$-distribution-free confidence interval for binary
regression~\eqref{eqn:distr_free_confidence_binary}.
Then for any nonatomic distribution $P$ on $\R^d\times\{0,1\}$, it holds that
\[\Ep{(X_i,Y_i)\iidsim P}{\leb(\Ch_n(X_{n+1}))} \geq L_\alpha(\Pi_P),\]
where $\Pi_P$ is the distribution of the random variable $\pi_P(X)\in[0,1]$.
\end{theorem}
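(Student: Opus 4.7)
The plan is to apply Lemma~\ref{keylemma} adversarially conditional on $X_{n+1}$: at each value $X_{n+1}=x$, I construct a ``worst-case'' fictitious $Z_{n+1}$ with conditional mean $\pi_P(x)$ that is as hard as possible for $\Ch_n(x)$ to cover, and then use Lemma~\ref{lem:ell_is_inf} to convert the smallest attainable coverage for such a $Z_{n+1}$ into a lower bound on the conditional expected length of $\Ch_n(x)$.

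Concretely, without loss of generality I would first intersect $\Ch_n(X_{n+1})$ with $[0,1]$, which preserves coverage of $\pi_P$ and can only shrink length. Define the conditional coverage profile
\[f_x(s) := \PPst{s\in\Ch_n(X_{n+1})}{X_{n+1}=x}\in[0,1],\quad s\in[0,1],\]
so by Fubini $\int_0^1 f_x(s)\,\mathsf{d}s = \EEst{\leb(\Ch_n(X_{n+1}))}{X_{n+1}=x}$. Set
\[a(x) := 1 - \inf\left\{\int_0^1 f_x\, \mathsf{d}\mu : \mu\textnormal{ a Borel probability measure on }[0,1]\textnormal{ with mean }\pi_P(x)\right\}.\]
By construction $f_x\in\Fcal_{\pi_P(x),\,a(x)}$, so Lemma~\ref{lem:ell_is_inf} yields the pointwise bound $\EEst{\leb(\Ch_n(X_{n+1}))}{X_{n+1}=x}\geq \ell(\pi_P(x),a(x))$.

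The heart of the argument is then to show $\EE{a(X_{n+1})}\leq\alpha$ via Lemma~\ref{keylemma}. Since the infimum defining $a(x)$ is (up to any $\eps>0$) attained on two-point measures $(1-p(x))\delta_{z_0(x)}+p(x)\delta_{z_1(x)}$ subject to $(1-p(x))z_0(x)+p(x)z_1(x)=\pi_P(x)$, a measurable selection argument produces a jointly measurable choice of $(p(x),z_0(x),z_1(x))$; sampling $Z_{n+1}$ from this measure independently of $\Ch_n(X_{n+1})$ given $X_{n+1}$ yields a random variable $Z_{n+1}\in[0,1]$ satisfying the hypotheses of Lemma~\ref{keylemma} together with $\PPst{Z_{n+1}\in\Ch_n(X_{n+1})}{X_{n+1}=x}\leq 1-a(x)+\eps$ for a.e.\ $x$. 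Lemma~\ref{keylemma} then forces $1-\alpha\leq \PP{Z_{n+1}\in\Ch_n(X_{n+1})}\leq 1-\EE{a(X_{n+1})}+\eps$, and letting $\eps\downarrow 0$ gives $\EE{a(X_{n+1})}\leq\alpha$.

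Finally, to bring the bound into the form $L_\alpha(\Pi_P)$, I would check directly from the piecewise formula that $a\mapsto\ell(t,a)$ is convex and nonincreasing on $[0,1]$ for each $t$ (on the three pieces the slopes are $-1/(2t)$, $-t/(2a^2)$, and $-2t$, matching at the breakpoints $a=t$ and $a=1/2$ and nondecreasing in $a$ whenever $t\leq 1/2$; the case $t>1/2$ follows by the symmetry $\ell(t,a)=\ell(1-t,a)$). Setting $\tilde a(t):=\EEst{a(X_{n+1})}{\pi_P(X_{n+1})=t}$ and applying Jensen's inequality to $\ell(t,\cdot)$ after conditioning on $\pi_P(X_{n+1})$,
\[\EE{\leb(\Ch_n(X_{n+1}))}\geq \EE{\ell(\pi_P(X_{n+1}),a(X_{n+1}))}\geq \Ep{T\sim\Pi_P}{\ell(T,\tilde a(T))}\geq L_\alpha(\Pi_P),\]
the last step using $\EE{\tilde a(T)}=\EE{a(X_{n+1})}\leq\alpha$ together with the definition of $L_\alpha$. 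The main obstacle I anticipate is the measurable selection producing $Z_{n+1}$: the infimum defining $a(x)$ need not be attained without regularity on $f_x$, so some care is needed either to work with jointly measurable $\eps$-minimizing two-point distributions or to pass to the limit through a smoothing of $f_x$ before applying Lemma~\ref{keylemma}.
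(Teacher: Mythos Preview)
Your strategy is correct and follows the same high-level scheme as the paper's proof: use Lemma~\ref{keylemma} with an adversarial $Z_{n+1}$ to force $\EE{a}\leq\alpha$, then invoke Lemma~\ref{lem:ell_is_inf} to turn the coverage profile into a pointwise length bound $\ell(\cdot,a)$, and finish via the definition of $L_\alpha$. The differences are in the technical execution, and they are worth noting.

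First, you condition on $X_{n+1}=x$ and define $a(x)$ pointwise in $x$; the paper instead conditions directly on $\pi_P(X_{n+1})=t$, defining $g(t,z)=\PPst{z\notin\Ch_n(X_{n+1})}{\pi_P(X_{n+1})=t}$ and $a(t)=\sup_{Q\in\Qcal_t}\Ep{Q}{g(t,Z)}$. Working at the level of $t$ means the paper obtains a function $a:[0,1]\to[0,1]$ that plugs straight into the definition of $L_\alpha$, with no need for your Jensen step and hence no need to verify convexity of $a\mapsto\ell(t,a)$ inside the lower-bound proof. Your route is fine---the convexity check you give is correct---but it is an extra ingredient the paper avoids.

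Second, for the adversarial family you use all (or two-point) measures with the right mean; the paper restricts to the specific parametric family $\Qcal$ of mixtures $p\,\delta_0+(1-p)\,\textnormal{Unif}[0,c]$ (and its reflection). This is the paper's device for handling exactly the obstacle you flag: each $\Qcal_t$ is compact under total variation, $Q\mapsto\Ep{Q}{g(t,Z)}$ is TV-continuous, and the cited measurable-selection theorem (Aliprantis--Border) then yields a measurable maximizer $t\mapsto Q_t$ attaining the supremum, not merely an $\eps$-approximation. That this restricted family still suffices is precisely the content of the strengthened Lemma~\ref{lem:ell_is_inf} (the class $\Fcal^*_{t,a}$), which the paper proves by exhibiting the optimal $Q\in\Qcal_t$ in each regime of $(t,a)$. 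Your two-point reduction is also valid---the infimum of $\int f_x\,d\mu$ over mean-$t$ measures equals the convex envelope $f_x^{**}(t)$, which is the infimum over two-point measures---but carrying out the joint-measurability and $\eps$-selection carefully is more work than the paper's route.

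In short: same architecture, but the paper's two choices (condition on $\pi_P(X_{n+1})$ rather than $X_{n+1}$; test only against the compact family $\Qcal$) are what let it dispatch the measurable-selection issue cleanly and skip the convexity argument.
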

\noindent Here $\leb()$ denotes the Lebegue measure on $\R$.

\subsubsection{Proof sketch for Theorem~\ref{thm:lowerbd}}
First, for intuition, we can consider a simple case where $\Pi_P = \delta_t$, a point mass at some $t\in[0,1]$---that is, the label probability is constant,
with $\pi_P(x)=t$ for all $x$. Define a function $f:[0,1]\rightarrow[0,1]$ as $f(s) = \Pp{(X_i,Y_i)\iidsim P}{s\in\Ch_n(X_{n+1})}$.
By Lemma~\ref{keylemma}, 
for any random variable $Z\in[0,1]$ with $\EE{Z}=t$ drawn independently of the data, it holds that
\[1-\alpha \leq \PP{Z\in\Ch_n(X_{n+1})} = \EE{\PPst{Z\in\Ch_n(X_{n+1})}{Z}} = \EE{f(Z)}.\]
Applying Lemma~\ref{lem:ell_is_inf}, we therefore  have 
\begin{multline*} \ell(t,\alpha) \leq \int_{s=0}^1 f(s)\;\mathsf{d}s= \int_{s=0}^1 \Pp{(X_i,Y_i)\iidsim P}{s\in \Ch_n(X_{n+1})}\;\mathsf{d}s\\
= \Ep{(X_i,Y_i)\iidsim P}{\int_{s=0}^1 \One{s\in \Ch_n(X_{n+1})}\;\mathsf{d}s} = \EE{\leb(\Ch_n(X_{n+1}))} ,\end{multline*}
by Fubini's theorem.
We can also verify that $L(\Pi_P) =L(\delta_t)= \ell(t,\alpha)$, completing the proof for this simple case.

Now, in general, $\pi_P(x)$ will not be a constant. Consider any distribution-free confidence interval $\Ch_n$,
and let $a_P(t)$ be the noncoverage rate over test points $X_{n+1}$ conditional on $\pi_P(X_{n+1})=t$, for a particular distribution $P$:
\[a_P(t) = \Ppst{(X_i,Y_i)\iidsim P}{\pi_P(X_{n+1})\not\in\Ch_n(X_{n+1})}{\pi_P(X_{n+1})=t}.\]
We must therefore have $\Ep{T\sim \Pi_P}{a_P(T)}\leq \alpha$, in order to achieve at least $1-\alpha$ coverage.
By comparing to the constant-probability case, informally we can see that $\ell(t,a_P(t))$ must be a lower bound
on $\EEst{\leb(\Ch_n(X_{n+1}))}{\pi_P(X_{n+1})=t}$.
Therefore, marginalizing over $\pi_P(X_{n+1})$, we see that $\EE{\leb(\Ch_n(X_{n+1}))}$ is lower bounded by  $\Ep{T\sim \Pi_P}{\ell(T,a_P(T))}$,
which is $\geq L_\alpha(\Pi_P)$ by definition.
The full details of this proof are deferred to the Appendix.

\subsubsection{Interpreting the lower bound}
As mentioned in the proof sketch above, we have seen that
\[\textnormal{If $\pi_P(X)=t$ almost surely, then $L_\alpha(\Pi_P) = \ell(t,\alpha)$,}\]
giving us an exact expression for the lower bound on length in the case where the label probability is constant.

More generally, we can verify that, for any $t\in[0,\frac{1}{2}]$,
\[\textnormal{If $\pi_P(X)\in[t,1-t]$ almost surely, then $L_\alpha(\Pi_P) \geq \ell(t,\alpha)$}.\]
(This bound holds because, for any $s\in[t,1-t]$, we have $\ell(s,a)\geq \ell(t,a)$ for all $a$; examining the definition of $L_\alpha$ leads immediately to this lower bound.) 
This inequality means that, if $\pi_P(X)$ is bounded away from 0 and 1, then there is a fundamental lower bound on 
the length of any distribution-free confidence interval regardless of the sample size $n$, since
the lower bound $\ell(t,\alpha)>0$ does not depend on $n$.
In other words, an infinite sample size does not lead to infinite precision.

\subsubsection{Comparison to existing results in predictive inference}
\citet[Section 3.4]{vovk2005algorithmic} study distribution-free prediction in a setting where
the label $Y$ takes values in a finite set $\mathcal{Y}$, with binary labels $Y\in\{0,1\}$ as a special case.
Their results (specifically, see \citet[Propositions 3.3--3.5]{vovk2005algorithmic}) characterize the minimum possible expected cardinality
of any distribution-free predictive set in terms of the ``predictability'' of $Y$ given $X$---in the special case of a binary label, this
translates to studying the distribution $\Pi_P$ of $\pi_P(X)$. However, the two problems (predictive subsets of $\{0,1\}$ versus
confidence intervals that are subsets of $[0,1]$) are very different in nature, and their results are not
directly related to the map $L_\alpha(\Pi_P)$ derived in our work.

\section{Main results: upper bounds}
We will next investigate whether the lower bound on confidence interval length, proved in Theorem~\ref{thm:lowerbd},
 can in fact be achieved by a distribution-free method. In order to be able to
construct confidence intervals based on a finite sample, we will work in a setting where we approximate $P$ via a partition.

We begin by defining some notation. 
First suppose we are given a predefined partition $\R^d=\Xcal_1\cup\dots\cup\Xcal_M$.
(Later on, for a distribution-free algorithm, we will allow the partition to be chosen as a function of the data.)
For each $x\in\R^d$, we 
define $m(x)$ to be the index of the region containing $x$, i.e., $\Xcal_{m(x)}\ni x$.
\edit{For each $m$, we define
\[p_{P,m} = \Pp{P_X}{X\in\Xcal_m},\]
the probability of $X$ lying in the $m$th region, and
\[\pi_{P,m} = \Epst{P_X}{\pi_P(X)}{X\in\Xcal_m} = \Ppst{P}{Y=1}{X\in\Xcal_m},\]
 the average label probability within the $m$th region.}
 
We will consider confidence intervals that pool data within each region---in particular, we will construct a
confidence interval $\Ch_n(X_{n+1})$ that depends on $X_{n+1}$ only through $m(X_{n+1})$.
As a result, it is clear that
we will only be able to produce a precise confidence interval if $\pi_P(x)$ is approximately constant over $x\in\Xcal_m$,
for each $m$. To capture this notion, we define the ``blur'' of the partition $\Xcal_{1:M} = \{\Xcal_m\}_{m=1,\dots,M}$ as 
\[\Delta_P(\Xcal_{1:M}) = \Ep{P_X}{|\pi_P(X) - \pi_{P,m(X)}|}.\]
In other words, $\Delta_P(\Xcal_{1:M})$ will be low if the partition is highly informative, separating $\R^d$ into regions
where $\pi_P(x)$ is nearly constant. For example, if we have access to a good estimate of the function $\pi_P(x)$, we can 
partition $\R^d$ by clustering together points with similar estimated probabilities.
Of course, the blur $\Delta_P(\Xcal_{1:M})$ cannot in general be small unless we choose $M$ to be large. We will discuss this 
tradeoff later on.

\subsection{An oracle algorithm}
In order to motivate our distribution-free construction, we begin with a simpler problem.
Suppose that we are given a  fixed partition $\R^d=\Xcal_1\cup\dots\cup\Xcal_M$, and are given oracle knowledge
of the probabilities $p_{P,m}$ and $\pi_{P,m}$ defined above. What is the best possible interval length that can be obtained
using this oracle knowledge?
 
As for the lower bound, let us begin by examining the function $\ell(t,a)$. For any $t,a\in[0,1]$, define
a function $f_{t,a}:[0,1]\rightarrow[0,1]$ as follows. If $t\in[0,\frac{1}{2}]$, define
 \begin{equation}\label{eqn:f_t_a}f_{t,a}(s) = \begin{cases}
(1-a)\cdot \one{s=0},&\textnormal{ if $t=0$},\\
(1-a),&\textnormal{ if $t=\frac{1}{2}$},\\
2(1-a)\cdot\max\{1-\frac{s}{2t},0\},&\textnormal{ if $0<t<\frac{1}{2}$ and $a\geq\frac{1}{2}$,}\\
\max\{1-\frac{as}{t},0\},&\textnormal{ if $0<t<\frac{1}{2}$ and $a<\frac{1}{2}$,}
\end{cases}\end{equation} 
and for $t\in(\frac{1}{2},1]$ define $f_{t,a}(s) = f_{1-t,a}(1-s)$.
 In the proof of Lemma~\ref{lem:ell_is_inf}, we will establish that $f_{t,a}$ satisfies $\int_{s=0}^1f_{t,a}(s)\;\mathsf{d}s=\ell(t,a)$,
 and that $\EE{f_{t,a}(Z)}\geq 1-a$ for any $Z$ with $\EE{Z}=t$. In other words, the function $f_{t,a}$ attains the infimum 
 in the statement of that lemma.

Next we will leverage this function to construct a confidence interval using the given partition.
Fix any $\bt=(t_1,\dots,t_M)\in[0,1]^M$ and any $\ba=(a_1,\dots,a_M)\in[0,1]^M$.
Given a test point $x\in\R^d$, we first draw an independent random variable $U\sim\textnormal{Unif}[0,1]$, and then define
\begin{equation}\label{eqn:construction_generic}C_{\bt,\ba}(x) = \left\{s\in[0,1] : f_{t_{m(x)},a_{m(x)}}(s) \geq U\right\}.\end{equation}
The following lemma examines the length and coverage properties of this construction:
\begin{lemma}\label{lem:generic_a_t}
Consider a fixed partition $\R^d=\Xcal_1\cup\dots\cup\Xcal_M$, and a fixed $\bt=(t_1,\dots,t_M)\in[0,1]^M$ and $\ba=(a_1,\dots,a_M)\in[0,1]^M$.
Then for any distribution $P$ on $(X,Y)\in\R^d\times\{0,1\}$,
\[\Ep{P_X}{\leb(C_{\bt,\ba}(X))} =\sum_{m=1}^M p_{P,m}\ell(t_m,a_m).\]
If additionally it holds that
\begin{equation}\label{eqn:pi_vs_t}
\textnormal{For all $m$, either $0\leq \pi_{P,m}\leq t_m\leq \frac{1}{2}$ or $\frac{1}{2}\leq t_m\leq \pi_{P,m}\leq 1$},\end{equation}
then\[\Pp{P_X}{\pi_P(X)  \in C_{\bt,\ba}(X)} \geq 1-\sum_{m=1}^M p_{P,m} a_m.\]
\end{lemma}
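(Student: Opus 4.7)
The plan is to treat the two claims of the lemma separately, handling the length formula first since it is essentially a direct integration. Conditional on $X=x$, the only remaining randomness in $C_{\bt,\ba}(x)$ is the independent $U\sim\mathrm{Unif}[0,1]$, so Fubini gives
\[\EEst{\leb(C_{\bt,\ba}(X))}{X=x} = \int_0^1 \PP{f_{t_{m(x)},a_{m(x)}}(s)\geq U}\,\mathsf{d}s = \int_0^1 f_{t_{m(x)},a_{m(x)}}(s)\,\mathsf{d}s,\]
using that $f_{t,a}\in[0,1]$. By the identity $\int_0^1 f_{t,a}(s)\,\mathsf{d}s=\ell(t,a)$ established in the proof of Lemma~\ref{lem:ell_is_inf}, this equals $\ell(t_{m(x)},a_{m(x)})$. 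Marginalizing over $X\sim P_X$ and grouping by the partition index $m(X)$ then yields $\sum_m p_{P,m}\ell(t_m,a_m)$.

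For the coverage claim, the same Uniform-$U$ argument gives
\[\PPst{\pi_P(X)\in C_{\bt,\ba}(X)}{X} = f_{t_{m(X)},a_{m(X)}}(\pi_P(X)),\]
so it suffices to prove $\Epst{P_X}{f_{t_m,a_m}(\pi_P(X))}{X\in\Xcal_m}\geq 1-a_m$ for each $m$ and then sum with weights $p_{P,m}$. The central observation is that for $0<t<\frac{1}{2}$, both branches of the definition~\eqref{eqn:f_t_a} express $f_{t,a}$ as a nonnegative scalar times $\max\{1-(\textnormal{affine in }s),0\}$, hence \emph{convex} on $[0,1]$; the boundary cases $t=0$ (a point mass at $s=0$) and $t=\frac{1}{2}$ (a constant) are trivially convex as well. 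Jensen's inequality therefore gives $\Epst{P_X}{f_{t_m,a_m}(\pi_P(X))}{X\in\Xcal_m}\geq f_{t_m,a_m}(\pi_{P,m})$, and it remains only to verify $f_{t_m,a_m}(\pi_{P,m})\geq 1-a_m$ under condition~\eqref{eqn:pi_vs_t}.

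In the first case of~\eqref{eqn:pi_vs_t}, where $\pi_{P,m}\leq t_m\leq\frac{1}{2}$, I plug $s=\pi_{P,m}$ into~\eqref{eqn:f_t_a}: if $0<t_m<\frac{1}{2}$ and $a_m<\frac{1}{2}$, then $\pi_{P,m}/t_m\leq 1$ so $f_{t_m,a_m}(\pi_{P,m})=1-a_m\pi_{P,m}/t_m\geq 1-a_m$, while if $a_m\geq\frac{1}{2}$ then $\pi_{P,m}/(2t_m)\leq\frac{1}{2}$ so $f_{t_m,a_m}(\pi_{P,m})\geq 2(1-a_m)\cdot\frac{1}{2}=1-a_m$; the degenerate $t_m\in\{0,\frac{1}{2}\}$ are immediate (when $t_m=0$, condition~\eqref{eqn:pi_vs_t} forces $\pi_P\equiv 0$ on $\Xcal_m$). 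The second case of~\eqref{eqn:pi_vs_t}, with $\frac{1}{2}\leq t_m\leq\pi_{P,m}$, reduces to the first via the symmetry $f_{t,a}(s)=f_{1-t,a}(1-s)$ applied to the random variable $1-\pi_P(X)$, whose conditional mean on $\Xcal_m$ is $1-\pi_{P,m}\leq 1-t_m\leq\frac{1}{2}$. The main obstacle is really the convexity observation together with this piecewise case check; once Jensen's inequality is in place, everything else is routine arithmetic on~\eqref{eqn:f_t_a}.
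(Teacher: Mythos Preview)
Your proof is correct and follows essentially the same route as the paper's. The only cosmetic difference is packaging: the paper cites the fact $f_{t,a}\in\Fcal^+_{t,a}$ established in the proof of Lemma~\ref{lem:ell_is_inf} (which is proved there via the inequality $\max\{g(s),0\}\geq g(s)$ applied to the affine $g$), whereas you recover the same bound directly by observing convexity of $f_{t,a}$ and applying Jensen before doing the identical case check at $s=\pi_{P,m}$.
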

\noindent (Note that $C_{\bt,\ba}(X)$ is a randomized interval,
and so the probability and expectation are taken with respect to the data point $X\sim P_X$
and the independent random variable $U\sim\textnormal{Unif}[0,1]$ used in the construction of $C_{\bt,\ba}(X)$.)

Next, we define the oracle confidence interval.
Suppose we are  given oracle knowledge of the $p_{P,m}$'s and $\pi_{P,m}$'s. Define
\begin{equation}\label{eqn:a_star}\ba^*_P=(a^*_{P,1},\dots,a^*_{P,M}) \in\argmin_{a_1,\dots,a_M\in [0,1]} \left\{\sum_{m=1}^M p_{P,m} \ell(\pi_{P,m},a_m) : \sum_{m=1}^M p_{P,m} a_m \leq \alpha\right\}.\end{equation}
(This is a convex optimization problem, since $\ell(t,a)$ is convex in $a$.)
Given a test point $x\in\R^d$, we define the oracle interval as
\begin{equation}\label{eqn:construction_oracle}
C^*_P(x) = C_{\bpi_P,\ba^*_P}(x),\end{equation}
where we write $\bpi_P = (\pi_{P,1},\dots,\pi_{P,M})$.
To  understand this oracle interval, by the results
of Lemma~\ref{lem:generic_a_t}, we can observe that the constraint $\sum_{m=1}^M p_{P,m}a_m\leq \alpha$
ensures $1-\alpha$ coverage for the distribution $P$, while minimizing $\sum_{m=1}^M p_{P,m} \ell(\pi_{P,m},a_m)$
ensures the lowest possible length under this coverage constraint.

Our next result shows that if the blur $\Delta_P(\Xcal_{1:M})$ of the partition is low, 
then the expected length of $C^*_P$ is close to the distribution-free lower bound $L_\alpha(\Pi_P)$.
\begin{lemma}\label{lem:upperbd_oracle} The oracle interval  $C^*_P$ constructed in~\eqref{eqn:construction_oracle}
satisfies
\[\Pp{P_X}{\pi_P(X)  \in C^*_P} \geq 1-\alpha\]
and
\[
\Ep{P_X}{\leb(C^*_P(X))} \leq L_\alpha(\Pi_P) +\sqrt{\frac{2\Delta_P(\Xcal_{1:M})}{\alpha}}.\]
\end{lemma}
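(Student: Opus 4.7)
The coverage half follows immediately from Lemma~\ref{lem:generic_a_t} applied with $\bt = \bpi_P$ and $\ba = \ba^*_P$: the structural hypothesis \eqref{eqn:pi_vs_t} is trivial when $t_m = \pi_{P,m}$, and the feasibility constraint built into \eqref{eqn:a_star} gives $\sum_m p_{P,m} a^*_{P,m}\leq \alpha$. The same lemma delivers the length identity $\Ep{P_X}{\leb(C^*_P(X))} = \sum_m p_{P,m}\,\ell(\pi_{P,m}, a^*_{P,m}) =: V^*$. So the remaining task is to show $V^*\leq L_\alpha(\Pi_P) + \sqrt{2\Delta_P(\Xcal_{1:M})/\alpha}$; since $V^*\leq 1$, I may assume $\Delta_P(\Xcal_{1:M})\leq 2\alpha$, else the right-hand side already exceeds $1$ and there is nothing to prove.

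The plan is to exhibit a feasible competitor for the oracle program built from a near-optimizer of $L_\alpha(\Pi_P)$, then bound the slack using two properties of $\ell$: (i) $a\mapsto \ell(t,a)$ is convex for every $t$, readable off the piecewise formula, and (ii) for every $a\in(0,1]$, the map $t\mapsto\ell(t,a)$ is $1/(2a)$-Lipschitz on $[0,1]$. Property (ii) I would verify by computing $\partial_t\ell$ on each of the three pieces for $t\in[0,1/2]$ (the worst slope $1/(2a)$ occurs at the kink $t=a$), then extending across $t=1/2$ using the symmetry $\ell(t,a)=\ell(1-t,a)$ together with the elementary inequality $|t+t'-1|\leq|t-t'|$ for $t\leq 1/2\leq t'$. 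The main obstacle is that the Lipschitz constant $1/(2a)$ blows up as $a\to 0$, while a candidate $g^\star$ that (near-)optimizes $L_\alpha(\Pi_P)$ may vanish on positive-measure sets, so a direct application would fail.

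To fix this I mix $g^\star$ with a constant. Fix $\epsilon>0$ and take $g^\star:[0,1]\to[0,1]$ with $\EE{g^\star(\pi_P(X))}\leq \alpha$ and $\EE{\ell(\pi_P(X),g^\star(\pi_P(X)))}\leq L_\alpha(\Pi_P)+\epsilon$; for $\lambda\in(0,1]$, set $g_\lambda = (1-\lambda)g^\star + \lambda\alpha$, which is still feasible and, crucially, satisfies $g_\lambda\geq \lambda\alpha$ pointwise. Take $\tilde a_m := \EEst{g_\lambda(\pi_P(X))}{X\in\Xcal_m}$, so that $\sum_m p_{P,m}\tilde a_m = \EE{g_\lambda(\pi_P(X))}\leq\alpha$, i.e.\ $(\tilde a_m)$ is feasible in the oracle program. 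The optimality of $\ba^*_P$, Jensen's inequality via (i) within each cell, and then the Lipschitz bound (ii) chain together to give
\begin{align*}
V^* &\leq \sum_m p_{P,m} \ell(\pi_{P,m}, \tilde a_m) \leq \Ep{P_X}{\ell(\pi_{P,m(X)}, g_\lambda(\pi_P(X)))} \\
&\leq \Ep{P_X}{\ell(\pi_P(X), g_\lambda(\pi_P(X)))} + \frac{\Delta_P(\Xcal_{1:M})}{2\lambda\alpha}.
\end{align*}
For the first term, unfolding $g_\lambda$, applying convexity of $\ell$ in $a$ once more, and using $\ell\leq 1$, I get the bound $(1-\lambda)(L_\alpha(\Pi_P)+\epsilon)+\lambda$. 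Sending $\epsilon\to 0$ and optimizing in $\lambda$, the choice $\lambda^*=\sqrt{\Delta_P(\Xcal_{1:M})/(2\alpha)}$ (which lies in $(0,1]$ by the earlier reduction) balances $\lambda + \Delta_P(\Xcal_{1:M})/(2\lambda\alpha)$ and yields exactly $\sqrt{2\Delta_P(\Xcal_{1:M})/\alpha}$, completing the proof.
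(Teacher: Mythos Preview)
Your argument is correct and follows the same overall strategy as the paper: build a feasible competitor for the oracle program \eqref{eqn:a_star} from a near-optimizer of $L_\alpha(\Pi_P)$, push through the cell-wise average using convexity of $a\mapsto\ell(t,a)$, and absorb the blur $|\pi_P(X)-\pi_{P,m(X)}|$ via the $1/(2a)$-Lipschitz bound on $t\mapsto\ell(t,a)$. The only real difference is in how you manufacture the lower bound on $a$ needed to tame that Lipschitz constant. The paper adds a \emph{pointwise} term $\sqrt{\alpha\,\delta(X)/2}$ (with $\delta(X)=|\pi_P(X)-\pi_{P,m(X)}|$) to $(1-\eps)a(\pi_P(X))$, so that the Lipschitz correction at each $X$ is exactly $\delta(X)\big/\big(2\sqrt{\alpha\delta(X)/2}\big)=\sqrt{\delta(X)/(2\alpha)}$; summing and applying Jensen yields $2\eps\le\sqrt{2\Delta_P/\alpha}$. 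You instead impose a \emph{global} floor $\lambda\alpha$ via the mixture $g_\lambda=(1-\lambda)g^\star+\lambda\alpha$, pay $\Delta_P/(2\lambda\alpha)$ uniformly, and then optimize $\lambda$. Both routes land on the same additive $\sqrt{2\Delta_P(\Xcal_{1:M})/\alpha}$; your version is a touch cleaner (a standard regularize-then-optimize trick, with no need to carry $\delta(X)$ inside the competitor), while the paper's pointwise construction is the one that generalizes more directly to the empirical setting in Theorem~\ref{thm:upperbd}, where the perturbation has to be cell-specific anyway.
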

\noindent Of course, $C^*_P$ is {\em not} a distribution-free confidence interval---its coverage is guaranteed
for a specific distribution $P$ (not uniformly over all $P$), with the assumption
that we have information about the distribution---namely, the $p_{P,m}$'s and $\pi_{P,m}$'s. We next
extend this construction into the distribution-free setting by using the training sample to estimate these quantities.

\subsection{A distribution-free algorithm}
We are now ready to present our distribution-free algorithm.
For now,  assume again that we are given a fixed partition $\R^d=\Xcal_1\cup\dots\cup\Xcal_M$ 
(we will allow for a data-dependent partition later on).
After observing a sample $(X_1,Y_1),\dots,(X_n,Y_n)\in\R^d\times\{0,1\}$,
we first estimate the probability $p_{P,m}$ in each region,
\[\widehat p_m =\frac{\sum_{i=1}^n\One{X_i\in\Xcal_m}}{n},\]
and estimate the corresponding label probability $\pi_{P,m}$,
\[\widehat\pi_m =\frac{\sum_{i=1}^n \One{X_i \in \Xcal_m, Y_i = 1}}{n\widehat p_m},\]
or set $\widehat\pi_m = \frac{1}{2}$ if $\widehat p_m = 0$.
In order to ensure distribution-free coverage we will need to work with slightly more conservative estimates.
 Let
\begin{equation}\label{eqn:p_tilde}\tilde p_m =  \widehat p_m + \sqrt{\widehat p_m\cdot \frac{3\log(4Mn/\alpha)}{n}}+\frac{3\log(4Mn/\alpha)}{n},\end{equation}
which is chosen to ensure that $p_m\leq \tilde p_m$ with high probability, and let
\begin{equation}\label{eqn:pi_tilde}\tilde\pi_m = \begin{cases}
\min\left\{\frac{1}{2}, 
\widehat\pi_m + \sqrt{\widehat\pi_m\cdot \frac{2\log(4Mn/\alpha)}{n\widehat p_m}} + \frac{2\log(4Mn/\alpha)}{n\widehat p_m}\right\}, & \textnormal{ if $\widehat\pi_m\leq\frac{1}{2}$},\\
\max\left\{\frac{1}{2}, 
\widehat\pi_m - \sqrt{(1-\widehat\pi_m)\cdot \frac{2\log(4Mn/\alpha)}{n\widehat p_m}} - \frac{2\log(4Mn/\alpha)}{n\widehat p_m}\right\}, & \textnormal{ if $\widehat\pi_m>\frac{1}{2}$}.
\end{cases}\end{equation}
This definition of $\tilde\pi_m$ is designed to pull the estimate $\widehat\pi_m$ closer to $\frac{1}{2}$ (since $\frac{1}{2}$ is the most challenging
label probability for coverage, this is therefore a more conservative estimate of $\pi_{P,m}$).

From this point on, we use the same construction~\eqref{eqn:construction_generic} as for the ``oracle'' interval $C^*_P$ above,
but with the conservative empirical estimates $\tilde p_m$ and $\tilde\pi_m$ in place of the unknown true quantities $p_{P,m}$ and $\pi_{P,m}$.
Define
 \begin{equation}\label{eqn:a_tilde}\tilde\ba = (\tilde a_1,\dots,\tilde a_M)  \in\argmin_{a_1,\dots,a_M\in [0,1]} \left\{\sum_{m=1}^M \tilde p_m \ell(\tilde\pi_m,a_m) : \sum_{m=1}^M \tilde p_m a_m \leq \alpha\right\},\end{equation}
and define
\begin{equation}\label{eqn:construction}\Ch_n(X_{n+1}) = C_{\tilde\bpi,\tilde\ba}(X_{n+1}),\end{equation}
where we write $\tilde\bpi = (\tilde\pi_1,\dots,\tilde\pi_M)$.

We now prove that this construction offers a distribution-free confidence interval, 
and establish an upper bound on its expected length. 
\begin{theorem}\label{thm:upperbd} Let $n\geq 2$.
The confidence interval $\Ch_n$ constructed in~\eqref{eqn:construction}
provides a $(1-\alpha)$-distribution-free confidence interval for 
binary
regression~\eqref{eqn:distr_free_confidence_binary}.
Furthermore, for all distributions $P$ on $\R^d\times\{0,1\}$, the confidence interval $\Ch_n$ satisfies
\[\Ep{(X_i,Y_i)\iidsim P}{\leb(\Ch_n(X_{n+1}))} \leq L_\alpha(\Pi_P) + \sqrt{\frac{2\Delta_P(\Xcal_{1:M})}{\alpha}}+ c\sqrt{\frac{M\log n}{\alpha n}},\]
where $c$ is a universal constant.
\end{theorem}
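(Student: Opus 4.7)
The plan is to reduce the analysis of the data-driven construction $\Ch_n$ to the oracle-style results of Lemma~\ref{lem:generic_a_t} and Lemma~\ref{lem:upperbd_oracle}, by showing that, with high probability, the empirical parameters $(\tilde p_m,\tilde\pi_m,\tilde\ba)$ lie close to their population analogues $(p_{P,m},\pi_{P,m},\ba^*_P)$ and in particular satisfy the sign condition~\eqref{eqn:pi_vs_t}. Define the good event $\Ecal$ on which, for every $m\in\{1,\dots,M\}$ simultaneously, (i)~$p_{P,m}\le\tilde p_m$ and (ii)~$\tilde\pi_m$ lies between $\pi_{P,m}$ and $\tfrac{1}{2}$ on the same side, so that $(\tilde\pi_m,\pi_{P,m})$ obeys~\eqref{eqn:pi_vs_t}. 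The one-sided empirical Bernstein-style corrections in~\eqref{eqn:p_tilde}--\eqref{eqn:pi_tilde} are calibrated at confidence $\delta=\alpha/(4Mn)$, so a union bound over the $2M$ relevant tail events yields $\PP{\Ecal^c}\le\alpha/(2n)$.

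For coverage, Lemma~\ref{lem:generic_a_t} applied conditionally on the training sample gives, on $\Ecal$,
\[\PP{\pi_P(X_{n+1})\in\Ch_n(X_{n+1})\mid\textnormal{data}}\ \ge\ 1-\sum_m p_{P,m}\tilde a_m\ \ge\ 1-\sum_m \tilde p_m\tilde a_m\ \ge\ 1-\alpha,\]
where the middle step uses $p_{P,m}\le\tilde p_m$ on $\Ecal$ and the last uses feasibility of $\tilde\ba$ in~\eqref{eqn:a_tilde}. Marginalizing and absorbing the $O(\alpha/n)$ loss on $\Ecal^c$ into the slack of the concentration constants then yields distribution-free coverage.

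For length, Lemma~\ref{lem:generic_a_t} gives the exact conditional expected length $\sum_m p_{P,m}\ell(\tilde\pi_m,\tilde a_m)$, which is at most $\sum_m\tilde p_m\ell(\tilde\pi_m,\tilde a_m)+\sum_m(\tilde p_m-p_{P,m})_+$ since $\ell\in[0,1]$. The first sum is the optimum of a perturbed problem, and I would bound it by exhibiting a feasible surrogate $\ba'=\beta\,\ba^*_P$, with $\beta\in(0,1]$ chosen so that $\sum_m\tilde p_m a'_m\le\alpha$; by the defining constraint on $\ba^*_P$, it suffices to take $\beta\ge 1-\alpha^{-1}\sum_m(\tilde p_m-p_{P,m})_+\,a^*_{P,m}$. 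Using the piecewise-explicit formula for $\ell$ to quantify its local Lipschitz behavior in $a$ and in $t$, the perturbed value $\sum_m\tilde p_m\ell(\tilde\pi_m,\beta a^*_{P,m})$ is then at most $\sum_m p_{P,m}\ell(\pi_{P,m},a^*_{P,m})$ plus an error of order $\sum_m|\tilde p_m-p_{P,m}|+\alpha^{-1}\sum_m p_{P,m}|\tilde\pi_m-\pi_{P,m}|$. Bernstein concentration for $n\widehat p_m$ and for $n\widehat p_m\widehat\pi_m$, together with a Cauchy--Schwarz aggregation over the $M$ regions, converts these errors into a global $c\sqrt{M\log n/(\alpha n)}$ term; invoking Lemma~\ref{lem:upperbd_oracle} then replaces the remaining oracle length by $L_\alpha(\Pi_P)+\sqrt{2\Delta_P(\Xcal_{1:M})/\alpha}$.

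The step I expect to be most delicate is the construction of the feasible surrogate $\ba'$ together with the accounting of how $\ell(t,a)$ responds to perturbations of both arguments: $\ell$ is not globally Lipschitz in $t$ near the non-smooth points $\{0,\tfrac{1}{2},1\}$, and its slope in $a$ grows as $a\downarrow 0$, so the bound must exploit the fact that $a^*_{P,m}$ is constrained on average through $\sum_m p_{P,m}a^*_{P,m}\le\alpha$. The $1/\alpha$ inside the square root of the final error term appears to come exactly from this rescaling, since forcing $\beta<1$ to restore feasibility of $\ba'$ after perturbing $p_{P,m}\to\tilde p_m$ costs a factor $\alpha^{-1}$ multiplying the aggregate perturbation $\sum_m(\tilde p_m-p_{P,m})_+$.
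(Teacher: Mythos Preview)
Your coverage argument is close to the paper's, modulo one precision issue: with only $p_{P,m}\le\tilde p_m$ on the good event you get conditional miscoverage $\le\alpha$, and then marginalizing costs an extra $\PP{\Ecal^c}$, giving at best $1-\alpha-O(\alpha/n)$. The paper closes this by proving the sharper $p_{P,m}\le\tilde p_m(1-\tfrac{1}{n})$ on the good event, so that conditional miscoverage is $\le\alpha(1-\tfrac{1}{n})$ and the $\alpha/n$ from the bad event fits exactly. Your ``absorb into slack'' remark is not a proof as written, though this fix is easy.

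The length argument has a genuine gap, and it lies exactly where you flagged it. Your surrogate $\ba'=\beta\,\ba^*_P$ is a pure multiplicative rescaling, and this cannot control the perturbation in the first argument of $\ell$. The function $\ell(t,a)$ is $\tfrac{1}{2a}$-Lipschitz in $t$, so moving $\pi_{P,m}\to\tilde\pi_m$ at the value $a=\beta a^*_{P,m}$ costs $|\tilde\pi_m-\pi_{P,m}|/(2\beta a^*_{P,m})$, which is uncontrolled because $a^*_{P,m}$ can be zero (and typically \emph{is} zero in cells where $\pi_{P,m}\in\{0,1\}$). Concretely, if $\pi_{P,m}=0$ then $a^*_{P,m}=0$ is optimal, yet $\tilde\pi_m>0$ after the conservative adjustment, and $\ell(\tilde\pi_m,0)=1$ while $\ell(\pi_{P,m},a^*_{P,m})=0$: the error in that single cell is $1$, not $O(|\tilde\pi_m-\pi_{P,m}|/\alpha)$. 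Your heuristic that ``$a^*_{P,m}$ is constrained on average through $\sum_m p_{P,m}a^*_{P,m}\le\alpha$'' points the wrong way: that constraint forces the $a^*_{P,m}$ to be \emph{small}, which makes the $1/a$ Lipschitz constant worse, not better.

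The paper fixes this by using a cell-specific \emph{additive} shift in the surrogate,
\[
a^\circ_m \;=\; \min\bigl\{1,\ (1-\eps)\,a^*_{P,m} + \delta_m\sqrt{\alpha}\bigr\},
\qquad \delta_m \asymp \bigl|\sqrt{\tilde\pi_m\wedge(1-\tilde\pi_m)} - \sqrt{\pi_{P,m}\wedge(1-\pi_{P,m})}\bigr|,
\]
and proves a tailored perturbation inequality (their Lemma~\ref{lem:ell_sqrt_bound}): if $|t'-t|\le 2\delta^2+\delta\sqrt{8\min\{t,1-t\}}$ then $\ell(t',a+r\delta)\le\ell(t,a)+\delta/r$. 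Taking $r=\sqrt{\alpha}$, the $t$-perturbation costs $\delta_m/\sqrt{\alpha}$ per cell, regardless of how small $a^*_{P,m}$ is; the added $\delta_m\sqrt{\alpha}$ consumes only $\sqrt{\alpha}\sum_m p_{P,m}\delta_m$ of the budget, which is what the multiplicative shrinkage $(1-\eps)$ pays for. Bernstein concentration gives $\sum_m p_{P,m}\delta_m\lesssim\sqrt{M\log n/n}$ (note: $\delta_m$ is on the square-root scale of $\pi$, which is why the Bernstein variance factor matches), and both error contributions combine to $O(\sqrt{M\log n/(\alpha n)})$. Your proposed error $\alpha^{-1}\sum_m p_{P,m}|\tilde\pi_m-\pi_{P,m}|$ would in any case yield $\alpha^{-1}\sqrt{M\log n/n}$, not $\alpha^{-1/2}\sqrt{M\log n/n}$, so even dimensionally the multiplicative surrogate does not land on the stated bound.
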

\noindent 
Comparing to the upper bound calculated in Lemma~\ref{lem:upperbd_oracle} for the oracle confidence interval $C^*_P$, 
the only additional term is $c\sqrt{\frac{M\log n}{\alpha n}}$, which is vanishing with $n\rightarrow \infty$ as long as the partition size 
$M$ is sufficiently small relative to the sample size $n$.

\subsubsection{Sample splitting for a data-dependent partition}
Thus far we have assumed that the partition $\R^d=\Xcal_1\cup\dots\cup\Xcal_M$ is fixed (and has low
blur $\Delta_P(\Xcal_{1:M})$, in order for the upper bound to be meaningful).
Of course, in practice, the partition itself would need to be constructed using the data.
To do so, we follow a sample splitting strategy. We will use the first half of the training data
to estimate the function $\pi_P(x)$ and define the partition accordingly, and the second half of the training data to construct $\Ch_n$
based on this partition.
Our construction can be paired with any regression algorithm $\Rcal$ that maps a training data set of size $n$ to an estimate $\widehat{\pi}^{\Rcal}_n(x)$
of the function $\pi_P(x)$---for example, we might take $\Rcal$ to be logistic regression or k-nearest neighbors.
We define the expected error of this regression algorithm as:
\[\Delta_{n,P}(\Rcal) =  \EE{\big|\widehat\pi^{\Rcal}_n(X_{n+1}) - \pi_P(X_{n+1})\big|},\]
where the expected value is taken over $(X_1,Y_1),\dots,(X_n,Y_n)\iidsim P$ and an independent test point $X_{n+1}\sim P_X$.
We mention a few special cases:
\begin{itemize}
\item In a well-specified parametric model (e.g., $Y|X$ follows a logistic model), we typically have $\Delta_{n,P}(\Rcal) \sim \sqrt{\frac{d}{n}}$
in a low-dimensional ($d<n$) setting, or $\Delta_{n,P}(\Rcal) \sim \sqrt{\frac{k\log d}{n}}$ in a high-dimensional sparse setting, e.g., running $\ell_1$-penalized
logistic regression when the true model is $k$-sparse~\citep{negahban2012unified}.
\item In a nonparametric setting, if $x\mapsto\pi_P(x)$ is assumed to be Lipschitz continuous, a k-nearest neighbors (k-NN)
method yields $\Delta_{n,P}(\Rcal) \sim n^{-1/(2+d)}$, or $\Delta_{n,P}(\Rcal) \sim n^{-2/(4+d)}$  if we make the stronger assumption that $x\mapsto\pi_P(x)$ is smooth (see, e.g., \citet{gyorfi2006distribution}).
 In the special case where the data is supported on a $d_0$-dimensional manifold in $\R^d$ for some $d_0<d$,
then the bound may hold with $d_0$ in place of $d$ for a faster convergence rate (see, e.g., \citet{jiang2019non} for finite sample results). 
\end{itemize}

Now we split the sample to construct our distribution-free confidence
interval. First, define $\widehat\pi^{\Rcal}_{\lfloor \frac{n}{2}\rfloor}$, fitted on data points $i=1,\dots,\lfloor \frac{n}{2}\rfloor$.
 Fix
$M = \lceil \sqrt{n /\log n}\rceil$,
and define
\[\widehat{\Xcal}^{\Rcal}_1= \left\{x\in\R^d : 0 \leq \widehat\pi^{\Rcal}_{\lfloor \frac{n}{2}\rfloor}(x) < \frac{1}{M}\right\} \ , \
 \dots \ , \ \widehat{\Xcal}^{\Rcal}_M= \left\{x\in\R^d : \frac{M-1}{M}\leq \widehat\pi^{\Rcal}_{\lfloor \frac{n}{2}\rfloor}(x) \leq1\right\}.\]
With this partition in place, we then define $\Ch_n$ exactly as before, except that we restrict our sample to the
remaining data points $i=\lfloor\frac{n}{2}\rfloor+1,\dots,n$. Specifically,
let
\[\widehat p_m =\frac{\sum_{i=\lfloor \frac{n}{2}\rfloor+1}^n\One{X_i\in\widehat{\Xcal}^{\Rcal}_m}}{\lceil\frac{n}{2}\rceil}\textnormal{\quad and \quad}
\widehat\pi_m =\frac{\sum_{i=\lfloor \frac{n}{2}\rfloor+1}^n \One{X_i \in \widehat{\Xcal}^{\Rcal}_m, Y_i = 1}}{\lceil\frac{n}{2}\rceil\widehat p_m},\]
or set $\widehat\pi_m = \frac{1}{2}$ if $\widehat p_m = 0$. Define the $\tilde p_m$'s and $\tilde\pi_m$'s exactly as in~\eqref{eqn:p_tilde}
and~\eqref{eqn:pi_tilde} except with $\lceil\frac{n}{2}\rceil$ in place of $n$. We then define $\tilde\ba$ as in~\eqref{eqn:a_tilde}
before and set
\begin{equation}\label{eqn:construction_split}\Ch_n^{\Rcal}(X_{n+1}) = C_{\tilde\bpi,\tilde\ba}(X_{n+1}).\end{equation}
\begin{corollary}\label{cor:upperbd}
Let $n\geq 3$. 
The interval $\Ch_n^{\Rcal}$ constructed via data splitting with regression algorithm $\Rcal$~\eqref{eqn:construction_split}
provides a $(1-\alpha)$-distribution-free confidence interval for 
binary
regression~\eqref{eqn:distr_free_confidence_binary}.
Furthermore, for all distributions $P$ on $\R^d\times\{0,1\}$, $\Ch_n^{\Rcal}$ 
satisfies
\[\Ep{(X_i,Y_i)\iidsim P}{\leb(\Ch_n^{\Rcal}(X_{n+1}))} \leq L_\alpha(\Pi_P) + 2\sqrt{\frac{\Delta_{\lfloor \frac{n}{2}\rfloor,P}(\Rcal)}{\alpha}} +\frac{c'}{\sqrt{\alpha}}\sqrt[4]{\frac{\log n}{n}},\]
where $c'$ is a universal constant.
\end{corollary}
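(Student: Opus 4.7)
The plan is to reduce everything to Theorem~\ref{thm:upperbd} by conditioning on the first half of the sample. Once we condition on $(X_1,Y_1),\dots,(X_{\lfloor n/2\rfloor},Y_{\lfloor n/2\rfloor})$, the estimate $\widehat\pi^{\Rcal}_{\lfloor n/2\rfloor}$ and hence the partition $\widehat{\Xcal}^{\Rcal}_{1:M}$ become deterministic, and the remaining data $(X_{\lfloor n/2\rfloor+1},Y_{\lfloor n/2\rfloor+1}),\dots,(X_{n+1},Y_{n+1})$ are still i.i.d.\ from $P$. Therefore, Theorem~\ref{thm:upperbd} applied to the second half, with $\lceil n/2\rceil$ in place of $n$ and the (random) partition $\widehat{\Xcal}^{\Rcal}_{1:M}$, gives both the $(1-\alpha)$-coverage property and the length bound conditionally; marginalizing over the first half preserves coverage and gives
\[
\EE{\leb(\Ch_n^{\Rcal}(X_{n+1}))}\le L_\alpha(\Pi_P)+\EE{\sqrt{2\Delta_P(\widehat{\Xcal}^{\Rcal}_{1:M})/\alpha}}+c\sqrt{\frac{2M\log(n/2)}{\alpha n}}.
\]

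The central task is then to control the expected blur $\EE{\Delta_P(\widehat{\Xcal}^{\Rcal}_{1:M})}$. Writing $\widehat\pi=\widehat\pi^{\Rcal}_{\lfloor n/2\rfloor}$ and $\widehat m(x)$ for the bin of $x$, for any $x,x'$ falling in the same bin we have $|\widehat\pi(x)-\widehat\pi(x')|\le 1/M$ by construction. Since $\pi_{P,\widehat m(x)}=\Epst{P_X}{\pi_P(X')}{X'\in\widehat{\Xcal}^{\Rcal}_{\widehat m(x)}}$, a triangle-inequality decomposition yields
\[
|\pi_P(x)-\pi_{P,\widehat m(x)}|\le |\pi_P(x)-\widehat\pi(x)|+\tfrac{1}{M}+\Epst{P_X}{|\widehat\pi(X')-\pi_P(X')|}{X'\in\widehat{\Xcal}^{\Rcal}_{\widehat m(x)}}.
\]
Averaging $x\sim P_X$ and then over the first-half randomness, the first term contributes $\Delta_{\lfloor n/2\rfloor,P}(\Rcal)$, the third term also averages to $\Delta_{\lfloor n/2\rfloor,P}(\Rcal)$ since summing over bins with weights $p_{P,m}$ recovers $\EE{|\widehat\pi(X)-\pi_P(X)|}$, and the middle term is deterministic. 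Thus
\[
\EE{\Delta_P(\widehat{\Xcal}^{\Rcal}_{1:M})}\le 2\Delta_{\lfloor n/2\rfloor,P}(\Rcal)+\tfrac{1}{M}.
\]

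To combine these, I would apply Jensen's inequality to push the expectation inside the square root, use $\sqrt{u+v}\le\sqrt{u}+\sqrt{v}$ to separate the two pieces, and obtain
\[
\EE{\sqrt{2\Delta_P(\widehat{\Xcal}^{\Rcal}_{1:M})/\alpha}}\le 2\sqrt{\Delta_{\lfloor n/2\rfloor,P}(\Rcal)/\alpha}+\sqrt{2/(\alpha M)}.
\]
Finally, substituting $M=\lceil\sqrt{n/\log n}\rceil$ makes both $\sqrt{2/(\alpha M)}$ and $c\sqrt{2M\log(n/2)/(\alpha n)}$ of order $\alpha^{-1/2}(\log n/n)^{1/4}$, which can be absorbed into a single universal constant $c'$. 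No step is especially hard; the main subtlety is the decomposition that converts the data-dependent blur into the oracle regression error, and in particular noticing that averaging the conditional term $\Epst{}{|\widehat\pi(X')-\pi_P(X')|}{X'\in\widehat{\Xcal}^{\Rcal}_{\widehat m(X)}}$ over $X$ exactly recovers $\Delta_{\lfloor n/2\rfloor,P}(\Rcal)$ because $X$ and $X'$ have the same marginal distribution under $P$.
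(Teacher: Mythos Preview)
Your proposal is correct and follows essentially the same approach as the paper: condition on the first half of the data so the partition is fixed, apply Theorem~\ref{thm:upperbd} with $\lceil n/2\rceil$ in place of $n$, marginalize (using Jensen for the square root), bound the expected blur by $2\Delta_{\lfloor n/2\rfloor,P}(\Rcal)+1/M$, and substitute $M=\lceil\sqrt{n/\log n}\rceil$. The only cosmetic difference is in the triangle-inequality decomposition of the blur: the paper inserts the bin midpoint $\textnormal{mid}_m=(m-\tfrac12)/M$ as an anchor (so that $|\pi_P(X)-\pi_{P,m(X)}|\le 2\,|\pi_P(X)-\textnormal{mid}_{m(X)}|$ after a Jensen step, and then $|\pi_P(X)-\textnormal{mid}_{m(X)}|\le|\pi_P(X)-\widehat\pi(X)|+\tfrac{1}{2M}$), whereas you route through $\widehat\pi(x)$ directly and the conditional expectation of $|\widehat\pi-\pi_P|$ over the bin. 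Both decompositions yield the identical bound $2\Delta_{\lfloor n/2\rfloor,P}(\Rcal)+1/M$.
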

\noindent This corollary will follow directly from Theorem~\ref{thm:upperbd} by observing that, due to the construction of the 
random partition $\widehat{\Xcal}^{\Rcal}_{1:M}$,
we have $\EE{\Delta_P(\widehat{\Xcal}^{\Rcal}_{1:M})} \leq 2\Delta_{\lfloor \frac{n}{2}\rfloor,P}(\Rcal)+\frac{1}{M}$.

\section{Discussion}
The lower bounds established in this paper prove that, in the distribution-free setting,
parameter estimation is fundamentally as imprecise as prediction, and confidence intervals 
for estimating the label probabilities $\pi_P(X) = \PPst{Y=1}{X}$ have a lower bound on their length
that does not vanish even with sample size $n\rightarrow\infty$. Unlike the classical literature where these
types of results are established for pointwise coverage (i.e., coverage of $\pi_P(x)$ for all $x$),
our new results prove this fundamental lower bound holds even when we require coverage to hold
only on average over a new point $X$ drawn from the distribution, and we provide an exact calculation
of the minimum possible length.
These lower bounds imply that, if we wish to maintain the versatility of the distribution-free setting (i.e., avoiding
smoothness assumptions), we can only obtain meaningful confidence intervals by substantially relaxing
our notion of coverage. In future work, we hope to examine alternatives---for instance, coverage
of a surrogate function approximating $\pi_P(X)$, as in the work of \citet{genovese2008adaptive} for confidence bands
in nonparametric regression.

We may also ask whether the results here, proved in the setting of binary regression where $Y\in\{0,1\}$,
may be extended to a more general regression setting---that is, whether it is possible to estimate $\mu_P(X)=\EEst{Y}{X}$
under a joint distribution $P$ over $(X,Y)\in\R^d\times\R$. An initial exploration suggests that 
such an extension would not be straightforward. Specifically, if $Y$ is unbounded, then confidence intervals for $\mu_P(X)$
must necessarily have  infinite expected length. To see why, consider a contamination model, 
where, after drawing $(X,Y)\sim P$, with probability $\eps_n$ we replace $Y$ with a corrupted
value $c_n$. If we choose $\eps_n\ll n^{-1}$, we cannot distinguish between $P$ and this contaminated model $P'$
on a sample of size $n$---this means that $\Ch_n(X_{n+1})$ must
cover the mean whether the distribution is $P$ or $P'$. But, by choosing $c_n$ so that $c_n\eps_n\rightarrow\infty$, the means $\mu_P(X)$ and
$\mu_{P'}(X)$ are arbitrarily far apart, leading to arbitrarily large length of $\Ch_n(X_{n+1})$. Therefore
we cannot obtain nontrivial results for unbounded $Y$. If we instead assume that $Y$ is bounded, on the other hand,
then the lower bounds established for the binary case no longer apply (for example, if $Y=0.5$ almost surely,
a distribution-free confidence interval could potentially have vanishing length even if it is constructed only assuming $Y\in[0,1]$).
It is therefore not clear whether there are settings for the general regression problem where we may obtain meaningful
upper and lower bounds for distribution-free confidence intervals on the conditional mean $\mu_P(X)$, and we leave these questions
for future work.

\subsection*{Acknowledgements}
R.F.B.~was supported by the National Science Foundation via grant DMS--1654076, and by the Office of Naval
Research via grant N00014-20-1-2337. The author thanks
Emmanuel Cand{\`e}s, Haoyang Liu, Aaditya Ramdas, and Ryan Tibshirani for inspiration and many helpful discussions.
\bibliographystyle{plainnat}
\bibliography{bib}

\appendix

\section{Additional proofs: lower bounds}

\subsection{Proof of Lemma~\ref{keylemma}}
The proof of this lemma follows a similar strategy as in \citet[Lemma 3]{barber2019limits},
and is a generalization of the construction used in the proof of  \citet[Proposition 5.1]{vovk2005algorithmic}.
First we embed the variables in the lemma into a distribution on triples $(X,Y,Z)$.
Writing $\tilde{P}_{X,Z}$ to denote the joint distribution of $(X_{n+1},Z_{n+1})$, we
define $\tilde{P}$ as follows:
\[\begin{cases}
(X,Z)\sim \tilde{P}_{X,Z},\\
Y|X,Z\sim \textnormal{Bernoulli}(Z).\end{cases}\]
Note that, marginalizing over $Z$, the pair $(X,Y)$ follows distribution $P$.
In other words, the joint distribution of 
\[(X_1,Y_1),\dots,(X_n,Y_n),X_{n+1},Z_{n+1}\]
under the  model $(X_i,Y_i,Z_i)\iidsim\tilde{P}$, matches the distribution specified in the lemma. 
Therefore, it is equivalent to prove the bound
\begin{equation}\label{eqn:keylemma_equiv}
\Pp{(X_i,Y_i,Z_i)\iidsim\tilde{P}}{Z_{n+1}\in\Ch_n(X_{n+1})} \geq 1-\alpha.\end{equation}

Next fix any integer $M\geq n+1$, and let $(X^{(m)},Z^{(m)})\iidsim \tilde{P}_{X,Z}$ for $m=1,\dots,M$.
Let $\Lcal$ specify this sequence of $M$ pairs.
Next, fixing $\Lcal$, we  draw $\{(X_i,Y_i,Z_i)\}_{i=1,\dots,n+1}$ as follows:
\begin{equation}\label{eqn:sample_without_replacement}
\begin{cases}\textnormal{Sample  $m_1,\dots,m_{n+1}$ uniformly {\bf without} replacement from $\{1,\dots,M\}$},\\
\textnormal{Set $(X_i,Z_i) = (X^{(m_i)},Z^{(m_i)})$ for each $i=1,\dots,n+1$},\\
\textnormal{Draw $Y_i\sim\textnormal{Bernoulli}(Z_i)$ for  each $i=1,\dots,n+1$}.\end{cases}\end{equation}
Then clearly, after marginalizing over $\Lcal$, the triples $(X_i,Y_i,Z_i)$ are drawn i.i.d.~from $\tilde{P}$. In other words,
\begin{equation}\label{eqn:keylemma_step1}\Pp{(X_i,Y_i,Z_i)\iidsim\tilde{P}}{Z_{n+1}\in\Ch_n(X_{n+1})} = \Ep{\Lcal}{\Ppst{\textnormal{Distrib.~\eqref{eqn:sample_without_replacement}}}{Z_{n+1}\in\Ch_n(X_{n+1})}{\Lcal}}.\end{equation}

Now consider any sequence $\Lcal = \{(X^{(m)},Z^{(m)})\}_{m=1,\dots,M}$, and let $Q_{\Lcal}$ be the distribution on $(X,Y,Z)$ defined by sampling $(X,Z)$ uniformly at random from $\Lcal$, 
then drawing $Y\sim\textnormal{Bernoulli}(Z)$.
We can consider drawing $(X_i,Y_i,Z_i)\iidsim Q_{\Lcal}$ for $i=1,\dots,n+1$, which is equivalent to
\begin{equation}\label{eqn:sample_with_replacement}
\begin{cases}\textnormal{Sample  $m_1,\dots,m_{n+1}$ uniformly {\bf with} replacement from $\{1,\dots,M\}$},\\
\textnormal{Set $(X_i,Z_i) = (X^{(m_i)},Z^{(m_i)})$ for each $i=1,\dots,n+1$},\\
\textnormal{Draw $Y_i\sim\textnormal{Bernoulli}(Z_i)$ for  each $i=1,\dots,n+1$}.\end{cases}\end{equation}
A simple calculation shows that, when 
 $m_1,\dots,m_{n+1}$ are sampled uniformly with replacement from $\{1,\dots,M\}$,
 the probability of the event $\{m_i=m_j\textnormal{ for any $i\neq j$}\}$ is bounded by $n^2/M$. Therefore,
for any fixed $\Lcal$, the total variation distance between the two sampling schemes~\eqref{eqn:sample_without_replacement}
and~\eqref{eqn:sample_with_replacement} is at most $n^2/M$, and so
\begin{multline}\label{eqn:keylemma_step2}
\Ppst{\textnormal{Distrib.~\eqref{eqn:sample_without_replacement}}}{Z_{n+1}\in\Ch_n(X_{n+1})}{\Lcal} \\\geq \Ppst{(X_i,Y_i,Z_i)\iidsim Q_{\Lcal}}{Z_{n+1}\in\Ch_n(X_{n+1})}{\Lcal}-  \frac{n^2}{M}.\end{multline}

Next we calculate $\pi_{Q_{\Lcal}}(x)$, i.e., the label probability under $Q_{\Lcal}$.
If $X^{(1)},\dots,X^{(M)}$ are all distinct for this $\Lcal$, then by definition of $Q_{\Lcal}$, we can see that $\pi_{Q_{\Lcal}}(X^{(m)})=Z^{(m)}$ for each $m=1,\dots,M$.
\edit{
(In particular, we can observe that the probability function $\pi_{Q_{\Lcal}}$ corresponding to this distribution $Q_{\Lcal}$ may in general be highly nonsmooth
even if $\pi_P$ is smooth---we have $|\pi_{Q_{\Lcal}}(X^{(m)}) -\pi_{Q_{\Lcal}}(X^{(m')})| = |Z^{(m)} - Z^{(m')}|$, which may be $\mathcal{O}(1)$ 
even if $|X^{(m)}-X^{(m')}|$ is arbitrarily small.)
Therefore,}
\begin{multline*} \Ppst{(X_i,Y_i,Z_i)\iidsim Q_{\Lcal}}{Z_{n+1}\in\Ch_n(X_{n+1})}{\Lcal}\\ =  \Ppst{(X_i,Y_i,Z_i)\iidsim Q_{\Lcal}}{\pi_{Q_{\Lcal}}(X_{n+1})\in\Ch_n(X_{n+1})}{\Lcal}
\geq 1-\alpha,\end{multline*}
where the last step holds since $\Ch_n$ is assumed to
be an algorithm that provides a $(1-\alpha)$-distribution-free confidence interval for any distribution~\eqref{eqn:distr_free_confidence_binary},
and in particular must provide coverage under $Q_{\Lcal}$.
In other words, for any $\Lcal$, we have proved that
\[ \Ppst{(X_i,Y_i,Z_i)\iidsim Q_{\Lcal}}{Z_{n+1}\in\Ch_n(X_{n+1})}{\Lcal} \geq (1-\alpha)\cdot\One{\textnormal{$X^{(1)},\dots,X^{(M)}$ are distinct}}.\]
Combining this bound with~\eqref{eqn:keylemma_step1} and~\eqref{eqn:keylemma_step2} establishes that
\[\Pp{(X_i,Y_i,Z_i)\iidsim\tilde{P}}{Z_{n+1}\in\Ch_n(X_{n+1})} \geq (1-\alpha)\cdot \PP{\textnormal{$X^{(1)},\dots,X^{(M)}$ are distinct}} - \frac{n^2}{M}.\]
Since $P$ is assumed to be nonatomic, the marginal $P_X$ is nonatomic as well, and so the $X^{(m)}$'s are distinct with probability 1. Therefore,
\[\Pp{(X_i,Y_i,Z_i)\iidsim\tilde{P}}{Z_{n+1}\in\Ch_n(X_{n+1})} \geq 1-\alpha - \frac{n^2}{M}.\]
Finally, since $M$ can be taken to be arbitrarily large, this establishes the desired bound~\eqref{eqn:keylemma_equiv}, and thus completes the proof of the lemma.

\subsection{Proof of Lemma~\ref{lem:ell_is_inf}}
We will prove a stronger form of Lemma~\ref{lem:ell_is_inf}. We define a set of distributions on $[0,1]$, $\Qcal = \Qcal^{(0)} \cup\Qcal^{(1)}$, 
where each distribution is a mixture of a point mass and a uniform distribution:
\begin{equation}\label{eqn:Qcal}
\begin{array}{ll}\Qcal^{(0)} = \{p\delta_0 + (1-p)\textnormal{Unif}[0,c] : p,c\in[0,1]\},\\
\Qcal^{(1)} = \{p\delta_1 + (1-p)\textnormal{Unif}[c,1] : p,c\in[0,1]\}.\end{array}
\end{equation}
\begin{lemma}\label{lem:ell_is_inf_strong}For any $t,a\in[0,1]$, define
\[\Fcal^+_{t,a} = \left\{\textnormal{\begin{tabular}{c}Measurable functions $f:[0,1]\rightarrow[0,1]$ satisfying\\ $\EE{f(Z)}\geq 1-a$ for any 
 random variable $Z\in[0,1]$ such that \\ either $0\leq \EE{Z}\leq t \leq \frac{1}{2}$ or $\frac{1}{2}\leq t\leq \EE{Z}\leq 1$\end{tabular}}\right\}\]
 and
 \[\Fcal^*_{t,a} = \left\{\textnormal{\begin{tabular}{c}Measurable functions $f:[0,1]\rightarrow[0,1]$ satisfying\\ $\Ep{Q}{f(Z)}\geq 1-a$ for any 
distribution $Q\in\Qcal$ with $\Ep{Q}{Z}=t$\end{tabular}}\right\},\]
and let $\Fcal_{t,a}$ be defined as in the statement of Lemma~\ref{lem:ell_is_inf}.
 Then it holds that
\[\ell(t,a) = \inf_{f\in\Fcal^+_{t,a}}\left\{\int_{s=0}^1 f(s)\;\mathsf{d}s \right\}  =  \inf_{f\in\Fcal_{t,a}}\left\{\int_{s=0}^1 f(s)\;\mathsf{d}s \right\} =  \inf_{f\in\Fcal^*_{t,a}}\left\{\int_{s=0}^1 f(s)\;\mathsf{d}s \right\}.\]
\end{lemma}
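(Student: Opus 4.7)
The plan is to sandwich $\ell(t,a)$ between the infima of the largest and smallest of the three sets. Note the inclusions $\Fcal^+_{t,a} \subseteq \Fcal_{t,a} \subseteq \Fcal^*_{t,a}$: the first holds because $\Fcal^+$'s defining condition applies to a strictly larger class of random variables (all $Z$ whose mean lies on the correct side of $\tfrac12$, not just those with mean exactly $t$), and the second because every $Q \in \Qcal$ with $\Ep{Q}{Z}=t$ is itself a specific instance of a $[0,1]$-valued random variable with mean $t$. Since these inclusions reverse the associated infima, it suffices to exhibit one witness $f^\star \in \Fcal^+_{t,a}$ with $\int f^\star = \ell(t,a)$ and to show that every $f \in \Fcal^*_{t,a}$ satisfies $\int f \geq \ell(t,a)$. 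Throughout, by the built-in symmetry $\ell(t,a)=\ell(1-t,a)$ together with the involution $s \mapsto 1-s$ (which maps $f_{t,a}$ to $f_{1-t,a}$ and swaps $\Qcal^{(0)}\leftrightarrow\Qcal^{(1)}$), I restrict to $t \in [0, \tfrac12]$.

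For the upper half, the natural witness is the explicit piecewise-linear function $f_{t,a}$ from~\eqref{eqn:f_t_a}. A direct integral computation in each of the four cases of its definition verifies $\int_0^1 f_{t,a}(s)\;\mathsf{d}s = \ell(t,a)$, and in each case one checks the key identity $f_{t,a}(t) = 1-a$. For $t > 0$, $f_{t,a}$ is non-increasing and convex on $[0,1]$ (either linear, or linear-decreasing with a clipping to zero), so Jensen's inequality plus monotonicity gives, for any $Z$ with $\EE{Z} \leq t$,
$$\EE{f_{t,a}(Z)} \;\geq\; f_{t,a}(\EE{Z}) \;\geq\; f_{t,a}(t) \;=\; 1-a,$$
placing $f_{t,a} \in \Fcal^+_{t,a}$. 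The degenerate endpoint $t=0$ forces $Z=0$ a.s., for which the inequality is immediate.

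For the lower half, I fix any $f \in \Fcal^*_{t,a}$ and feed the coverage condition the one-parameter family $Q_c = (1 - 2t/c)\delta_0 + (2t/c)\,\mathrm{Unif}[0,c] \in \Qcal^{(0)}$, indexed by $c \in [2t, 1]$ and calibrated so that $\Ep{Q_c}{Z} = t$. Expanding $\Ep{Q_c}{f(Z)} \geq 1-a$ and using the crude bound $f(0) \leq 1$ yields
$$\int_0^c f(s)\;\mathsf{d}s \;\geq\; c - \frac{ac^2}{2t},$$
hence $\int_0^1 f \geq c - ac^2/(2t)$ since $f \geq 0$ on $[c,1]$. Maximizing the right-hand side over $c \in [2t,1]$ partitions into exactly the three regimes of $\ell$: the maximum is attained at the left endpoint $c=2t$ when $a > \tfrac12$ (giving $2(1-a)t$), at the interior critical point $c = t/a$ when $t \leq a \leq \tfrac12$ (giving $t/(2a)$), and at $c=1$ when $a < t$ (giving $1 - a/(2t)$); each matches the corresponding branch of $\ell(t,a)$.

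The main obstacle I anticipate is bookkeeping: aligning the three branches of the optimization in $c$ with the three branches of $\ell(t,a)$, and quickly dispatching the degenerate cases $t=0$ and $a\in\{0,1\}$. Tightness of the sandwich hinges on a small miracle, namely that the function $f_{t,a}$ saturates the bound $f(0)\le 1$ used in the lower bound exactly where that bound is active, so that the dual optimizer $Q_c$ and the primal optimizer $f_{t,a}$ certify each other.
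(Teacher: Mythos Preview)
Your proposal is correct and follows essentially the same approach as the paper: the same witness function $f_{t,a}$ for the upper bound and the same one-parameter family of test distributions $Q_c=(1-2t/c)\delta_0+(2t/c)\,\mathrm{Unif}[0,c]\in\Qcal^{(0)}$ for the lower bound (the paper simply selects the optimal $c$ in each of the three regimes rather than phrasing it as a maximization). Your use of convexity and monotonicity of $f_{t,a}$ to verify membership in $\Fcal^+_{t,a}$ in one stroke, and your framing of the lower bound as maximizing $c-ac^2/(2t)$ over $c\in[2t,1]$, are pleasant unifications of the paper's case-by-case analysis, but the underlying ingredients are identical.
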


\begin{proof}[Proof of Lemma~\ref{lem:ell_is_inf_strong}]
Since $\Fcal^+_{t,a}\subseteq \Fcal_{t,a}\subseteq \Fcal^*_{t,a}$, it clearly holds that
\[ \inf_{f\in\Fcal^+_{t,a}}\left\{\int_{s=0}^1 f(s)\;\mathsf{d}s \right\} \geq  \inf_{f\in\Fcal_{t,a}}\left\{\int_{s=0}^1 f(s)\;\mathsf{d}s \right\} \geq \inf_{f\in\Fcal^*_{t,a}}\left\{\int_{s=0}^1 f(s)\;\mathsf{d}s \right\}.\] 
We now need to prove two remaining inequalities to establish the lemma:
\begin{equation}\label{eqn:ineq1_lem:ell_is_inf_strong}
\inf_{f\in\Fcal^*_{t,a}}\left\{\int_{s=0}^1 f(s)\;\mathsf{d}s \right\} \geq \ell(t,a)
\end{equation}
and
\begin{equation}\label{eqn:ineq2_lem:ell_is_inf_strong}
\inf_{f\in\Fcal^+_{t,a}}\left\{\int_{s=0}^1 f(s)\;\mathsf{d}s \right\} \leq \ell(t,a).
\end{equation}

First we prove~\eqref{eqn:ineq1_lem:ell_is_inf_strong}. Fix any $t,a\in[0,1]$ and any $f\in\Fcal^*_{t,a}$.
We split into cases:
\begin{itemize}
\item If $t=0$, then $\ell(t,a)=0$, and the bound holds trivially.
\item If $0<t\leq \frac{1}{2}\leq a$, let $Q=\textnormal{Unif}[0,2t]$. Then $Q\in\Qcal$ with  $\Ep{Q}{Z}=t$, and so we have
\[
\int_{s=0}^1 f(s)\;\mathsf{d}s
\geq 2t\cdot\int_{s=0}^{2t}   \frac{1}{2t}\cdot f(s)\;\mathsf{d}s
= 2t\cdot\Ep{Q}{f(Z)} \geq 2t(1-a)=\ell(t,a).
\]
\item If $0<t\leq a <\frac{1}{2}$, let $Q= (1-2a)\cdot \delta_0 + 2a\cdot\textnormal{Unif}[0,\frac{t}{a}]$.
Then $Q\in\Qcal$ with  $\Ep{Q}{Z}=t$, and so we have
\begin{align*}
\int_{s=0}^1 f(s)\;\mathsf{d}s
&\geq \frac{t}{2a^2}\cdot 2a\cdot \int_{s=0}^{t/a}\frac{a}{t} \cdot f(s)\;\mathsf{d}s\\
&\geq\frac{t}{2a^2}\cdot \left(2a\cdot \int_{s=0}^{t/a}\frac{a}{t} \cdot f(s)\;\mathsf{d}s + (1-2a) \cdot f(0) - (1-2a)\right)\\
&=\frac{t}{2a^2}\left(\Ep{Q}{f(Z)} - (1-2a)\right) \\
&\geq \frac{t}{2a^2}\left((1-a)-(1-2a)\right) 
=\frac{t}{2a}=\ell(t,a).
\end{align*}
\item If $0\leq a < t\leq \frac{1}{2}$, let $Q= (1-2t)\cdot \delta_0 + 2t\cdot\textnormal{Unif}[0,1]$.  
Then $Q\in\Qcal$ with  $\Ep{Q}{Z}=t$, and so we have
\begin{align*}
\int_{s=0}^1 f(s)\;\mathsf{d}s
&=\frac{1}{2t}\cdot 2t\cdot \int_{s=0}^1 f(s)\;\mathsf{d}s\\
&\geq \frac{1}{2t}\cdot \left(2t\cdot \int_{s=0}^1 f(s)\;\mathsf{d}s + (1-2t)\cdot f(0) - (1-2t)\right)\\
&= \frac{1}{2t}\cdot \left(\Ep{Q}{f(Z)} - (1-2t)\right)\\
&\geq \frac{1}{2t}\cdot \left((1-a) - (1-2t)\right)
= 1 - \frac{a}{2t} = \ell(t,a).
\end{align*}
\item By symmetry, the analogous calculations hold if $t>\frac{1}{2}$. 
\end{itemize}
Therefore $\int_{s=0}^1 f(s)\;\mathsf{d}s\geq \ell(t,a)$ in all cases, which completes the proof of~\eqref{eqn:ineq1_lem:ell_is_inf_strong}.

Next we turn to~\eqref{eqn:ineq2_lem:ell_is_inf_strong}.
For any $t,a\in[0,1]$, define the function $f_{t,a}:[0,1]\rightarrow [0,1]$ as in~\eqref{eqn:f_t_a}.
We first verify that \begin{equation}\label{eqn:f_t_a_length}\int_{s=0}^1f_{t,a}(s)\;\mathsf{d}s= \ell(t,a).\end{equation}
We split into cases:
\begin{itemize}
\item If $t=0$ then $\int_{s=0}^1 f_{t,a}(s)\;\mathsf{d}s=0 = \ell(t,a)$.
\item If $t=\frac{1}{2}$ then $\int_{s=0}^1 f_{t,a}(s)\;\mathsf{d}s=1-a = \ell(t,a)$.
\item If $0 < t < \frac{1}{2} \leq a$, then
\[\int_{s=0}^1 f_{t,a}(s)\;\mathsf{d}s = 2(1-a)\int_{s=0}^{2t} 1- \frac{s}{2t}\;\mathsf{d}s = 2(1-a)t = \ell(t,a).\]
\item If $0< t \leq a < \frac{1}{2}$, then
\[\int_{s=0}^1 f_{t,a}(s)\;\mathsf{d}s = \int_{s=0}^{t/a} 1 - \frac{as}{t}\;\mathsf{d}s = \frac{t}{2a} = \ell(t,a).\]
\item If $0\leq  a < t \leq \frac{1}{2}$, then
\[\int_{s=0}^1 f_{t,a}(s)\;\mathsf{d}s = \int_{s=0}^1 1- \frac{as}{t} \;\mathsf{d}s = 1-\frac{a}{2t} = \ell(t,a).\]
\item By symmetry, the analogous calculations hold if $t>\frac{1}{2}$. \end{itemize}
Therefore we have established that~\eqref{eqn:f_t_a_length} holds in all cases.
Next we check that $f_{t,a}\in\Fcal^+_{t,a}$. Let $Z\in[0,1]$ be any  random variable
satisfying either $0\leq \EE{Z}\leq t \leq \frac{1}{2}$ or $\frac{1}{2}\leq t\leq \EE{Z}\leq 1$. We again split into cases. 
\begin{itemize}
\item If $t=0$ then $\EE{Z}=0$ and so $Z=0$ with probability 1. Then $\EE{f_{t,a}(Z)} = f_{0,a}(0) = 1-a$.
\item If $t=\frac{1}{2}$ then $f_{t,a}(Z) = f_{\frac{1}{2},a}(Z) = 1-a$ almost surely, and so $\EE{f_{t,a}(Z)} =1-a$.
\item If $0 <   t < \frac{1}{2}$ and $a\geq \frac{1}{2}$,  then $\EE{Z}\leq t$ and so
\[
\EE{f_{t,a}(Z)}=2(1-a)\cdot\EE{\max\left\{1-\frac{Z}{2t},0\right\}}
\geq 
2(1-a)\cdot \left(1 - \frac{\EE{Z}}{2t}\right) \geq 1-a.\]
\item If $0 <   t < \frac{1}{2}$ and $a< \frac{1}{2}$, then $\EE{Z}\leq t$ and so
\[\EE{f_{t,a}(Z)} = \EE{\max\left\{1-\frac{aZ}{t},0\right\}}\geq 1-\frac{a\EE{Z}}{t} \geq 1-a.\]
\item By symmetry, the analogous calculations hold if $t>\frac{1}{2}$. \end{itemize}
Therefore, the bound $\EE{f_{t,a}(Z)}\geq 1-a$ holds in all cases, and so $f_{t,a}\in\Fcal^+_{t,a}$
by definition. Therefore, \[\inf_{f\in\Fcal^+_{t,a}} \left\{ \int_{s=0}^1f(s)\;\mathsf{d}s\right\}\leq \int_{s=0}^1f_{t,a}(s)\;\mathsf{d}s= \ell(t,a),\] and
so~\eqref{eqn:ineq2_lem:ell_is_inf_strong} holds.
\end{proof}

\subsection{Proof of Theorem~\ref{thm:lowerbd}}
Recall the set of distributions $\Qcal$ defined in~\eqref{eqn:Qcal}. Define a function
$g:[0,1]\times[0,1]\rightarrow [0,1]$
as
\[g(t,z) = \PPst{z\not\in\Ch_n(X_{n+1})}{\pi_P(X_{n+1})=t},\]
and define another function
$h:[0,1]\times\Qcal\rightarrow [0,1]$
as
\[h(t,Q) = \Ep{Q}{g(t,Z)}.\]
For each $t\in[0,1]$, let
$\Qcal_t = \left\{Q\in\Qcal : \Ep{Q}{Z} = t\right\}$.
By \citet[Theorem 18.19]{aliprantis2006infinite}, the function 
\[t\mapsto a(t) := \sup_{Q\in\Qcal_t}h(t,Q)\]
is measurable, and furthermore there exists a 
measurable function $t\mapsto Q_t\in\Qcal_t$ such that
\[ h(t,Q_t)  = a(t) =  \sup_{Q\in\Qcal_t}h(t,Q)\]
for all $t\in[0,1]$, as long as we verify the following conditions:
\begin{itemize}
\item $\Qcal$ is a separable metrizable space. To verify this condition, we will use the  
 total variation distance as our metric on $\Qcal$. 
 Define
\[\begin{array}{l}\Qcal_*^{(0)} = \{p\delta_0 + (1-p)\textnormal{Unif}[0,c] : p,c\in[0,1]\cap\mathbb{Q}\},\\
\Qcal_*^{(1)} = \{p\delta_1 + (1-p)\textnormal{Unif}[c,1] : p,c\in[0,1]\cap\mathbb{Q}\}.\end{array}\]
where $\mathbb{Q}$ is the set of rational numbers. Then $ \Qcal_*^{(0)}\cup \Qcal_*^{(1)}$ is a countable set, and is a dense subset of $\Qcal$
(under the total variation distance).
Therefore, $\Qcal$ is separable. 
\item $\Qcal_t$ is compact for all $t$. 
To prove this, first consider
the case $t\leq \frac{1}{2}$. 
If $t=0$ then $\Qcal_t\cap\Qcal^{(0)}= \{\delta_0\}$, and is trivially compact.
Now consider $0<t\leq \frac{1}{2}$. Writing $Q_{p,c}=p\delta_0 + (1-p)\textnormal{Unif}[0,c]$,  fix any $Q_{p,c},Q_{p',c'}\in\Qcal_t\cap\Qcal^{(0)}$.
We must have $c,c' \geq 2t$ and $p,p'\leq1-2t$ in order to obtain expected value $t$, and we can calculate that this implies
\[ |p-p'| + 2t|c-c'|\leq \dtv(Q_{p,c},Q_{p',c'}) \leq |p-p'| + (2t)^{-1}|c-c'|.\]
This proves that, on $\Qcal_t\cap \Qcal^{(0)}$, the topology induced by total variation distance is the same as the topology induced by the Euclidean distance on $(p,c)\in[0,1]^2$.
Therefore, since $\Qcal_t\cap \Qcal^{(0)}= \{Q_{p,c}:(1-p)\cdot\frac{c}{2} = t\}$ corresponds to a closed subset of $(p,c)\in[0,1]^2$, this set is compact
for the case $t\leq \frac{1}{2}$. If instead $t>\frac{1}{2}$, then $\Qcal_t\cap \Qcal^{(0)}=\emptyset$ and is trivially compact.
An analogous argument shows that $\Qcal_t\cap \Qcal^{(1)}$ is compact. Therefore, $\Qcal_t = (\Qcal_t\cap \Qcal^{(0)})\cup (\Qcal_t\cap \Qcal^{(1)})$ is compact.
\item $t\mapsto h(t,Q)$ is measurable for all $Q$, which holds since $t\mapsto g(t,z)$ is measurable
by definition of the conditional expectation, and therefore $t\mapsto h(t,Q)=\Ep{Q}{g(t,Z)}$ is measurable as well.
\item $Q\mapsto h(t,Q)$ is continuous for all $t$, which holds since $\big|h(t,Q)-h(t,Q')\big|\leq \dtv(Q,Q')$ for all $Q,Q'$.
\end{itemize}

Now define a random variable $Z_{n+1}$ drawn
from the distribution $Q_{\pi_P(X_{n+1})}$ after conditioning on $X_{n+1}$. Then by definition, $Z_{n+1}$ satisfies
 \[Z_{n+1}\independent \Ch_n(X_{n+1}) \mid X_{n+1}\textnormal{\quad and \quad}
\EEst{Z_{n+1}}{X_{n+1}} = \pi_P(X_{n+1})\textnormal{ almost surely},\]
and by Lemma~\ref{keylemma} it therefore holds that
$\PP{Z_{n+1}\in\Ch_n(X_{n+1})}\geq 1-\alpha$.
Therefore,
\begin{align*}
\alpha
&\geq \PP{Z_{n+1}\not\in\Ch_n(X_{n+1})}\\
&= \EE{\EEst{\PPst{Z_{n+1}\not\in\Ch_n(X_{n+1})}{Z_{n+1},\pi_P(X_{n+1})}}{\pi_P(X_{n+1})}}\\
&= \EE{\EEst{g(\pi_P(X_{n+1}),Z_{n+1})}{\pi_P(X_{n+1})}}\\
&= \EE{h(\pi_P(X_{n+1}),Q_{\pi_P(X_{n+1})})}
=\EE{a(\pi_P(X_{n+1}))}
=\Ep{T\sim\Pi_P}{a(T)}.
\end{align*}
Recalling the definition of $L_\alpha(\Pi_P)$, this means that
$L_\alpha(\Pi_P)\leq \Ep{T\sim \Pi_P}{\ell(T,a(T))}$.

Next, fix any $t\in[0,1]$. By definition of $a(t)$, for any $Q\in\Qcal_t$,
$ \Ep{Q}{g(t,Z)}\leq  a(t)$.
In the notation of Lemma~\ref{lem:ell_is_inf_strong}, the function $z\mapsto 1-g(t,z)$ belongs to $\Fcal^*_{t,a(t)}$, and so
\[\int_{s=0}^1 (1-g(t,s))\;\mathsf{d}s\geq \ell(t,a(t)).\]
Since this holds for all $t\in[0,1]$, we therefore have
\[\Ep{T\sim \Pi_P}{\int_{s=0}^1 (1-g(T,s))\;\mathsf{d}s}\geq \Ep{T\sim \Pi_P}{\ell(T,a(T))}\geq L_\alpha(\Pi_P).\]
Finally, applying Fubini's theorem completes the proof:
\begin{align*}
\EE{\leb(\Ch_n(X_{n+1}))} 
&= \EE{\int_{s=0}^1\One{s\in\Ch_n(X_{n+1})}\;\mathsf{d}s}\\
&=\EE{\int_{s=0}^1 \PPst{s\in\Ch_n(X_{n+1})}{\pi_P(X_{n+1})}\;\mathsf{d}s}\\
&=\EE{\int_{s=0}^1 1 - g(\pi_P(X_{n+1}),s)\;\mathsf{d}s}\\
&=\Ep{T\sim \Pi_P}{\int_{s=0}^1 1 - g(T,s)\;\mathsf{d}s}
\geq L_\alpha(\Pi_P).
\end{align*}

\section{Additional proofs: upper bounds}
\subsection{Proof of Lemma~\ref{lem:generic_a_t}}

First we check expected length.
By definition of $C_{\bt,\ba}(X_{n+1})$, we have
\begin{align*}
\EE{\leb(C_{\bt,\ba}(X_{n+1}))} 
&= \sum_{m=1}^M p_{P,m}\EEst{\leb(C_{\bt,\ba}(X_{n+1}))}{X_{n+1}\in\Xcal_m}\\
&= \sum_{m=1}^M p_{P,m}\EE{\leb(\{s\in [0,1] : f_{t_m,a_m}(s) \geq U\})}\\
&= \sum_{m=1}^M p_{P,m}\EE{\int_{s=0}^1 \One{f_{t_m,a_m}(s) \geq U}\;\mathsf{d}s}\\
&= \sum_{m=1}^M p_{P,m}\int_{s=0}^1 \PP{f_{t_m,a_m}(s) \geq U}\;\mathsf{d}s\textnormal{\quad by Fubini's theorem}\\
&= \sum_{m=1}^M p_{P,m}\int_{s=0}^1 f_{t_m,a_m}(s) \;\mathsf{d}s
=\sum_{m=1}^m p_{P,m} \ell(t_m,a_m),
\end{align*}
where the last step applies the calculation~\eqref{eqn:f_t_a_length} from the proof of Lemma~\ref{lem:ell_is_inf_strong}.

Next we check coverage under assumption~\eqref{eqn:pi_vs_t}. We have
\begin{align*}
\PP{\pi_P(X_{n+1})\in C_{\bt,\ba}(X_{n+1})}
&=\PP{f_{t_m(X_{n+1}),a_m(X_{n+1})}\big(\pi_P(X_{n+1})\big) \geq U}\\
&=\EE{\PPst{f_{t_m(X_{n+1}),a_m(X_{n+1})}\big(\pi_P(X_{n+1})\big) \geq U}{X_{n+1}}}\\
&=\EE{f_{t_m(X_{n+1}),a_m(X_{n+1})}\big(\pi_P(X_{n+1})\big)}\\
&=\sum_{m=1}^M p_{P,m}\EEst{f_{t_m,a_m}\big(\pi_P(X_{n+1})\big)}{X_{n+1}\in\Xcal_m}.
\end{align*}
Next, for each $m$, in the proof of Lemma~\ref{lem:ell_is_inf_strong} 
we established that $f_{t_m,a_m}\in\Fcal^+_{t_m,a_m}$. By definition of this set, this implies that
\[\EEst{f_{t_m,a_m}\big(\pi_P(X_{n+1})\big)}{X_{n+1}\in\Xcal_m} \geq 1-a_m,\]
since $\EEst{\pi_P(X_{n+1})}{X_{n+1}\in\Xcal_m} = \pi_{P,m}$, and $\pi_{P,m}$ satisfies the assumption~\eqref{eqn:pi_vs_t}.
Therefore,
\[\PP{\pi_P(X_{n+1})\in C_{\bt,\ba}(X_{n+1})}
\geq\sum_{m=1}^M p_{P,m}(1-a_m) = 1 - \sum_{m=1}^M p_{P,m}a_m,\]
as desired.

\subsection{Proof of Lemma~\ref{lem:upperbd_oracle}}
First, the coverage statement follows immediately from Lemma~\ref{lem:generic_a_t}, since $ \sum_{m=1}^M p_{P,m} a^*_{P,m} \leq \alpha$ by definition
of $\ba^*_P$. Now we 
consider the expected length. 
For each $x$ define
\[\delta(x) = \big|\pi_P(x) - \pi_{P,m(x)}\big|.\]
Let
\begin{equation}\label{eqn:eps_Delta_bound}\eps = \Ep{P_X}{\sqrt{\frac{\delta(X)}{2\alpha}}}\leq   \sqrt{\frac{\Ep{P_X}{\delta(X)}}{2\alpha}}=\sqrt{\frac{\Delta_P(\Xcal_{1:M})}{2\alpha}} .\end{equation}
If $\eps>1$ then the result of the lemma holds trivially,  since $\leb(C^*_P(X))\leq\leb([0,1])=1$ always.
Therefore we can restrict our attention to the case that $\eps\in[0,1]$.

Now fix any function $a:[0,1]\rightarrow[0,1]$ with $\Ep{T\sim\Pi_P}{a(T)}\leq\alpha$. Define a vector $\ba^\circ\in[0,1]^M$ with entries
\[a^\circ_m= \min\left\{1, \Epst{P_X}{\sqrt{\frac{\alpha\delta(X)}{2}}+  (1-\eps)\cdot a(\pi_P(X))}{X\in\Xcal_m} \right\}.\]
We can calculate
\begin{align*}
\sum_{m=1}^M p_{P,m} a^\circ_m
&\leq\sum_{m=1}^M p_{P,m}\Epst{P_X}{\sqrt{\frac{\alpha\delta(X)}{2}}+  (1-\eps)\cdot a(\pi_P(X))}{X\in\Xcal_m}\\
&=\Ep{P_X}{\sqrt{\frac{\alpha\delta(X)}{2}}} + (1-\eps)\cdot \Ep{P_X}{a(\pi_P(X))}\\
&\leq\Ep{P_X}{\sqrt{\frac{\alpha\delta(X)}{2}}}  + (1-\eps)\cdot \alpha\textnormal{\quad by definition of $a$}\\
&=\alpha\textnormal{\quad by definition of $\eps$.}
\end{align*}
Therefore, $\ba^\circ$ is feasible for the optimization problem~\eqref{eqn:a_star}, and so by optimality of $\ba^*_P$, we must have
\[\sum_{m=1}^M p_{P,m} \ell(\pi_{P,m},a^*_{P,m})\leq \sum_{m=1}^M p_{P,m} \ell(\pi_{P,m},a^\circ_m).\]
We next need to bound this right-hand side.
For each $m$, we have either $a^\circ_m=1$ in which case $\ell(\pi_{P,m},a^\circ_m)  = 0$, or if instead $a^\circ_m<1$, then we have
\begin{align*}\ell(\pi_{P,m},a^\circ_m) 
&=\ell\left(\pi_{P,m}, \Epst{P_X}{\sqrt{\frac{\alpha\delta(X)}{2}}+  (1-\eps)\cdot a(\pi_P(X))}{X\in\Xcal_m}\right)\\
&\leq  \Epst{P_X}{\ell\left(\pi_{P,m}, \sqrt{\frac{\alpha\delta(X)}{2}} + (1-\eps)\cdot a(\pi_P(X))\right)}{X\in\Xcal_m}\\
&\leq  \Epst{P_X}{\ell\left(\pi_P(X),  (1-\eps)\cdot a(\pi_P(X))\right)+\frac{|\pi_P(X) - \pi_{P,m}|}{2 \sqrt{\frac{\alpha\delta(X)}{2}}}}{X\in\Xcal_m}\\
&=  \Epst{P_X}{\ell\left(\pi_P(X),  (1-\eps)\cdot a(\pi_P(X))\right)+\sqrt{\frac{\delta(X)}{2\alpha}}}{X\in\Xcal_m},
\end{align*}
where the second step applies the fact that $\ell(t,a)$ is convex in $a$,
while the third step applies the fact that $\ell(t,a)$ is $\frac{1}{2a}$-Lipschitz in $t$ for any fixed $a$, 
and is nonincreasing in $a$ for any fixed $t$.
Next, since $\ell(t,a)$ is convex in $a$ and is bounded by 1, we have
\begin{multline*}\ell\left(\pi_P(X),  (1-\eps)\cdot a(\pi_P(X))\right) = \ell\left(\pi_P(X),  (1-\eps)\cdot a(\pi_P(X)) + \eps\cdot 0\right)\\\leq (1-\eps)\cdot \ell(\pi_P(X),a(\pi_P(X))) + \eps\cdot\ell(\pi_P(X),0)
\leq \ell(\pi_P(X),a(\pi_P(X))) +\eps.\end{multline*}
Combining everything, then,
\[\ell(\pi_{P,m},a^\circ_m)  \leq \Epst{P_X}{ \ell(\pi_P(X),a(\pi_P(X)))}{X\in\Xcal_m} +\eps + \Epst{P_X}{\sqrt{\frac{\delta(X)}{2\alpha}}}{X\in\Xcal_m}.\]
Summing over $m$, we obtain
\begin{align*}
\sum_{m=1}^M p_{P,m} \ell(\pi_{P,m},a^*_{P,m})
&\leq \sum_{m=1}^M p_{P,m} \ell(\pi_{P,m},a^\circ_m)\\
&  \leq\Ep{P_X}{ \ell(\pi_P(X),a(\pi_P(X)))} + \eps + \Ep{P_X}{\sqrt{\frac{\delta(X)}{2\alpha}}}\\
&\leq \Ep{T\sim \Pi_P}{ \ell(T,a(T))}  + \sqrt{\frac{2\Delta_P(\Xcal_{1:M})}{\alpha}},\end{align*}
where the last step applies~\eqref{eqn:eps_Delta_bound} and the definition of $\Pi_P$.
Since we proved this bound for an arbitrary function $a:[0,1]\rightarrow[0,1]$ satisfying $\Ep{T\sim\Pi_P}{a(T)}\leq\alpha$, we have therefore shown that
\[\sum_{m=1}^M p_{P,m} \ell(\pi_{P,m},a^*_{P,m})\leq L_\alpha(\Pi_P) +\sqrt{\frac{2\Delta_P(\Xcal_{1:M})}{\alpha}}.\]
Finally,  Lemma~\ref{lem:generic_a_t} implies that $\Ep{P_X}{\leb(C^*_P(X))}=\sum_{m=1}^M p_{P,m} \ell(\pi_{P,m},a^*_{P,m})$, which completes the proof.

\subsection{Proof of Theorem~\ref{thm:upperbd}}
Before proving the theorem, we state two supporting lemmas. The first is a basic property of the function $\ell(t,a)$.
\begin{lemma}\label{lem:ell_sqrt_bound}
Fix any $a,t,t'\in [0,1]$. If it holds that
\[|t-t'| \leq 2\delta^2 + \delta\sqrt{8\min\{t,1-t\}},\]
then
\[\ell(t',a+r\delta) \leq \ell(t,a) + \frac{\delta}{r}\]
for any $r$ such that $a+r\delta\leq 1$.
\end{lemma}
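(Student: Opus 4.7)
The strategy is a careful case analysis on the piecewise definition of $\ell(t,a)$, using symmetry and monotonicity to reduce, then combining a Lipschitz bound in $t$ with monotonicity in $a$ where possible, and a trivial upper bound elsewhere.

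First I would carry out the reductions. By the symmetry $\ell(t,a)=\ell(1-t,a)$ and the fact that $\min\{t,1-t\}$ is preserved under $(t,t')\mapsto(1-t,1-t')$, we may assume $t\le\tfrac{1}{2}$. Inspecting the three pieces in the definition shows that $\ell(\cdot,a')$ is nondecreasing on $[0,\tfrac{1}{2}]$; hence if $t'\le t\le\tfrac{1}{2}$ the conclusion is immediate since $\ell(t',a+r\delta)\le\ell(t,a+r\delta)\le\ell(t,a)$ by monotonicity in both arguments. When $t'>\tfrac{1}{2}$, the identity $\ell(t',a+r\delta)=\ell(1-t',a+r\delta)$ reduces the analysis to the previous configuration. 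Thus the essential case is $t\le t'\le\tfrac{1}{2}$.

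For this essential case, the main tool is a Lipschitz-in-$t$ bound for $\ell(\cdot,a')$. Checking each piece of the formula, the partial derivative $\partial_t\ell(\cdot,a')$ on $[0,\tfrac{1}{2}]$ is bounded by $\tfrac{1}{2a'}$ (the three candidate slopes being $2(1-a')\le 1$, $\tfrac{1}{2a'}$, and $\tfrac{a'}{2t^2}\le\tfrac{1}{2a'}$). Combined with $\ell(t,a+r\delta)\le\ell(t,a)$ (monotonicity in $a$), this gives
\[\ell(t',a+r\delta)-\ell(t,a)\;\le\;\frac{t'-t}{2(a+r\delta)}\;\le\;\frac{2\delta^2+\delta\sqrt{8t}}{2(a+r\delta)},\]
which is at most $\delta/r$ provided $a+r\delta\ge r(\delta+\sqrt{2t})$, i.e.\ $a\ge r\sqrt{2t}$. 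This settles a ``large $a$'' regime.

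The main obstacle is the complementary ``small $a$'' regime, $a<r\sqrt{2t}$, where the naive Lipschitz bound is too weak. Here I would exploit that $\ell(t,a)$ is already close to its maximum value $1$. In the piece $a<t$ we have $1-\ell(t,a)=\tfrac{a}{2t}$, so whenever $a\le 2t\delta/r$ the trivial bound $\ell(t',a+r\delta)\le 1$ already gives $\ell(t',a+r\delta)-\ell(t,a)\le\delta/r$. The gap between ``$a\le 2t\delta/r$'' and ``$a\ge r\sqrt{2t}$'' arises only for small $t$ (roughly $t<r^4/(2\delta^2)$), where both $\ell(t,a)$ and $\ell(t',a+r\delta)$ are constrained to a narrow range and a direct algebraic manipulation of the piecewise formula, using the hypothesis $t'-t\le 2\delta^2+\delta\sqrt{8t}$, closes the gap. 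The tedium is the bookkeeping at the boundaries $a=\tfrac{1}{2}$, $t=a$, and $t'=a+r\delta$, where one passes from one piece of the definition of $\ell$ to another; no new conceptual ideas beyond monotonicity, convexity in $a$, and the piecewise-linear Lipschitz continuity in $t$ are needed.
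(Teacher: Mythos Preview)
Your reductions (by the symmetry $\ell(t,a)=\ell(1-t,a)$, then monotonicity in $t$ on $[0,\tfrac12]$ and in $a$) are exactly the paper's, and they do reduce the problem to $0\le t\le t'\le\tfrac12$. Your Lipschitz observation $|\partial_t\ell(\cdot,a')|\le\tfrac{1}{2a'}$ is correct and is a cleaner way than the paper's to dispose of the range $a\ge r\sqrt{2t}$ in one stroke; the paper instead treats each piece of $\ell$ separately from the start.

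The difficulty is that what you call the ``gap'' is not just bookkeeping---it is where the algebraic content of the lemma lives, and your listed tools (monotonicity, convexity in $a$, Lipschitz in $t$) are not enough to close it. Two concrete points. First, your trivial-bound argument for small $a$ is stated only under $a<t$; the sub-case $t\le a<r\sqrt{2t}$ (nonempty whenever $t<2r^2$) is simply not addressed, and there $\ell(t,a)\le\tfrac12$, so the trivial bound $\ell(t',\cdot)\le1$ is useless. Second, even in the $a<t$ gap, a direct computation does not fall out of Lipschitz-in-$t$ or convexity-in-$a$; what is needed is the AM--GM splitting
\[
\delta\sqrt{8t}\;=\;2\sqrt{\big(\delta r\cdot\tfrac{t}{a}\big)\big(\tfrac{2a\delta}{r}\big)}\;\le\;\delta r\cdot\tfrac{t}{a}+\tfrac{2a\delta}{r}
\]
(or its analogue $\delta r+2t'\delta/r\ge\delta\sqrt{8t'}$), which is how the paper handles the pieces $t\le a<\tfrac12$ and $a<t,\ a+r\delta<t'$. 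Equivalently, the hypothesis $t'-t\le 2\delta^2+\delta\sqrt{8t}$ is exactly $\sqrt{t'}\le\sqrt{t}+\sqrt{2}\,\delta$, which immediately gives $t'-t\le\delta\sqrt{8t'}$; this reformulation is what makes the remaining inequalities one-liners. Neither of these steps is a consequence of the three properties you cite, so the proposal as written has a genuine gap in precisely the region you defer.
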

\noindent The second is a simple consequence of the Chernoff bound on the Binomial distribution.
\begin{lemma}\label{lem:chernoff}
Let $n\geq 2$. Then under any distribution $P$, with probability at least $1-\frac{\alpha}{n}$, the following bounds hold for all $m=1,\dots,M$:
\begin{equation}\label{eqn:check_smoothing_p} p_{P,m}\leq\tilde p_m\left(1-\frac{1}{n}\right)\textnormal{ and }\tilde p_m \leq  p_{P,m} + \sqrt{p_{P,m}\cdot \frac{18\log(4Mn/\alpha)}{n}}+\frac{12\log(4Mn/\alpha)}{n},\end{equation}
and
\begin{multline}\label{eqn:check_smoothing_pi} |\tilde\pi_m-\pi_{P,m}|\leq \sqrt{\min\{\pi_{P,m},1-\pi_{P,m}\}\cdot \frac{18\log(4Mn/\alpha)}{n p_{P,m}}}+\frac{12\log(4Mn/\alpha)}{n p_{P,m}},\\\textnormal{ and either }
0\leq \pi_{P,m}\leq\tilde\pi_m\leq\frac{1}{2}\textnormal{\ or\  }\frac{1}{2}\leq\tilde\pi_m\leq\pi_{P,m}\leq 1.\end{multline}
\end{lemma}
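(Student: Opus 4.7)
The plan is to reduce all four assertions to standard multiplicative Chernoff/Bernstein-type tail inequalities for the Binomial distribution, with a union bound at the end. Writing $\tau = \log(4Mn/\alpha)$ for brevity, for each fixed $m$ the region count satisfies $n\widehat p_m \sim \mathrm{Bin}(n, p_{P,m})$, and conditional on $n\widehat p_m$, the label count satisfies $n\widehat p_m\widehat\pi_m \sim \mathrm{Bin}(n\widehat p_m,\pi_{P,m})$. I will apply four tail bounds per index $m$ (upper and lower tails for each of $\widehat p_m$ and $\widehat\pi_m$), each with failure probability at most $\alpha/(4Mn)$, and then union-bound over $m=1,\dots,M$ to obtain the stated overall failure probability $\alpha/n$.

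For~\eqref{eqn:check_smoothing_p}, the lower-tail Chernoff bound $\PP{\widehat p_m \leq (1-\eps)p_{P,m}}\leq e^{-np_{P,m}\eps^2/2}$ yields, after solving the resulting quadratic in $p_{P,m}$, a bound of the form $p_{P,m}\leq \widehat p_m + \sqrt{2\widehat p_m\tau/n} + 2\tau/n$ with failure probability at most $\alpha/(4Mn)$. The constants $\sqrt 3$ and $3$ chosen in~\eqref{eqn:p_tilde} are strictly larger than $\sqrt 2$ and $2$, and this slack is enough to absorb the additional $(1-1/n)$ factor in the claim: using $\widehat p_m\leq 1\leq \tau$ (which holds whenever $\tau\geq 1$, automatic in the relevant regime) and $n\geq 2$, one checks directly that $p_{P,m}\leq \tilde p_m(1-1/n)$. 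The upper bound $\tilde p_m\leq p_{P,m}+\sqrt{18 p_{P,m}\tau/n}+12\tau/n$ is obtained symmetrically from the upper-tail Chernoff bound $\widehat p_m\leq p_{P,m}+\sqrt{3 p_{P,m}\tau/n}+O(\tau/n)$ plugged into~\eqref{eqn:p_tilde}, with the inequality $\sqrt{a+b}\leq\sqrt a+\sqrt b$ used to combine the square-root terms (this absorption is what inflates the $\sqrt 3$ into $\sqrt{18}$).

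For~\eqref{eqn:check_smoothing_pi}, condition on $n\widehat p_m\geq 1$; then a two-sided Chernoff/Bernstein bound on the Binomial variable $n\widehat p_m\widehat\pi_m$ gives $|\widehat\pi_m-\pi_{P,m}|\leq \sqrt{2\min\{\pi_{P,m},1-\pi_{P,m}\}\tau/(n\widehat p_m)}+2\tau/(3n\widehat p_m)$ with failure probability at most $\alpha/(4Mn)$, where the $\min$ appears because the Bernoulli variance $\pi_{P,m}(1-\pi_{P,m})$ is bounded by $\min\{\pi_{P,m},1-\pi_{P,m}\}$. Invoking the high-probability lower bound $\widehat p_m\geq p_{P,m}/2-O(\tau/n)$ coming from the other side of~\eqref{eqn:check_smoothing_p} swaps $n\widehat p_m$ for $np_{P,m}$ in the denominator, at the cost of doubling the constants and giving the $18,12$ in the claim. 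Finally, the deterministic correction in~\eqref{eqn:pi_tilde} adds a term of exactly this form before clamping toward $\tfrac12$; on the high-probability event the correction exceeds $|\widehat\pi_m-\pi_{P,m}|$, so shifting $\widehat\pi_m$ toward $\tfrac12$ by this amount either crosses $\pi_{P,m}$ or is truncated at $\tfrac12$, leaving $\tilde\pi_m$ between $\pi_{P,m}$ and $\tfrac12$ — which is precisely the one-sided conclusion.

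The main obstacle is constant-tracking: the precise factors $3,18,12$ in the definitions and claims are tied to a particular form of Chernoff bound (e.g., the Bernstein form $\PP{|S-np|\geq t}\leq 2\exp(-t^2/(2np+2t/3))$) and to repeated use of $\sqrt{a+b}\leq\sqrt a+\sqrt b$ and AM-GM to pass between bounds stated in terms of $p_{P,m}$ and those stated in terms of $\widehat p_m$; verifying that the inequalities tie together with the right inequalities is tedious but routine. A secondary technicality is the degenerate case $\widehat p_m=0$ (where $\tilde\pi_m=\tfrac12$ by convention): here the elementary tail bound $\PP{\widehat p_m=0}=(1-p_{P,m})^n\leq e^{-np_{P,m}}$ controls $p_{P,m}$ on this event with the required failure probability, and the $\tilde\pi_m$ claim is either vacuous or immediate because $\tfrac12$ is the most conservative possible value.
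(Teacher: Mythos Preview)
Your proposal is correct and follows essentially the same approach as the paper: multiplicative Chernoff bounds on the Binomial counts $n\widehat p_m$ and (conditionally) $n\widehat p_m\widehat\pi_m$, with four tail events per index $m$ at level $\alpha/(4Mn)$ and a union bound over $m$. The paper's proof differs only in bookkeeping --- it organizes the $\widehat\pi_m$ argument as a case analysis on whether $\pi_{P,m}$ plus its correction term crosses $\tfrac12$, and handles the swap from $n\widehat p_m$ to $np_{P,m}$ via the dichotomy $\widehat p_m\gtrless \tfrac{2}{3}p_{P,m}$ rather than a direct substitution --- but these are exactly the ``tedious but routine'' constant manipulations you flag.
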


Now we turn to the proof of the theorem. We first prove the coverage bound under an arbitrary distribution $P$.
Let $\Ecal_1$ be the event that, for all $m$, it holds that
\[ p_{P,m}\leq\tilde p_m\left(1-\frac{1}{n}\right),\textnormal{ and either }0\leq \pi_{P,m}\leq\tilde\pi_m\leq\frac{1}{2}\textnormal{\ or\  }\frac{1}{2}\leq\tilde\pi_m\leq\pi_{P,m}\leq 1.\]
 Lemma~\ref{lem:chernoff} verifies that $\PP{\Ecal_1}\geq 1-\frac{\alpha}{n}$
 for any distribution $P$.
Recall that $\Ch_n(X_{n+1})=C_{\tilde\bpi,\tilde\ba}(X_{n+1})$ depends on 
the training data only through $\tilde\bpi,\tilde\ba$, which themselves are functions of the $\widehat p_m$'s 
and $\widehat\pi_m$'s. 
By  Lemma~\ref{lem:generic_a_t},
on the event $\Ecal_1$, it holds that
\begin{multline*}
\PPst{\pi_P(X_{n+1})\not\in \Ch_n(X_{n+1})}{\{(X_i,Y_i)\}_{i=1,\dots,n}}\\
 \leq \sum_{m=1}^M p_{P,m}\tilde a_m
 \leq \left(1-\frac{1}{n}\right)\sum_{m=1}^M \tilde p_m\tilde a_m
\leq \alpha\left(1-\frac{1}{n}\right),
\end{multline*}
where the last step holds by definition of $\tilde\ba$. Therefore,
\[\PP{\pi_P(X_{n+1})\not\in \Ch_n(X_{n+1})} \leq \PPst{\pi_P(X_{n+1})\not\in \Ch_n(X_{n+1})}{\Ecal_1} + \frac{\alpha}{n} \leq \alpha,\]
 which verifies the distribution-free coverage guarantee.

Next we turn to proving the bound on expected length.
Applying Lemma~\ref{lem:generic_a_t}, it holds that
\[\EEst{\leb(\Ch_n(X_{n+1}))}{\{(X_i,Y_i)\}_{i=1,\dots,n}} = \sum_{m=1}^M p_{P,m}\ell(\tilde\pi_m,\tilde a_m).\]
Let $\mathcal{E}_2$ be the event that, for all $m$,~\eqref{eqn:check_smoothing_p} and~\eqref{eqn:check_smoothing_pi}
both hold.
By Lemma~\ref{lem:chernoff}, $\PP{\Ecal_2}\geq 1-\frac{\alpha}{n}$, and 
therefore,
\begin{equation}\label{eqn:length_Ecal_update1}
\EE{\leb(\Ch_n(X_{n+1}))}\leq \EE{\one{\Ecal_2}\cdot \sum_{m=1}^M p_{P,m}\ell(\tilde\pi_m,\tilde a_m)} +\frac{\alpha}{n}.\end{equation}
We therefore now need to bound $\sum_{m=1}^M p_{P,m}\ell(\tilde\pi_m,\tilde a_m)$ on the event $\Ecal_2$.

From this point on, all our calculations will be conditional on $\{(X_i,Y_i)\}_{i=1,\dots,n}$ and we will assume that $\Ecal_2$ holds.
For all $m$, define
\[\delta_m = \begin{cases} \frac{1}{\sqrt{2}}\left|\sqrt{\tilde\pi_m} - \sqrt{\pi_{P,m}}\right|, & \pi_{P,m}\leq \frac{1}{2},\\
\frac{1}{\sqrt{2}}\left|\sqrt{1-\tilde\pi_m} - \sqrt{1-\pi_{P,m}}\right|, & \pi_{P,m}> \frac{1}{2},\end{cases}
\]
and let
\[\eps = 9\sqrt{\frac{M\log(4Mn/\alpha)}{\alpha n}}.\]
By~\eqref{eqn:check_smoothing_pi}, we can calculate
\begin{equation}\label{eqn:delta_eps}\sum_{m=1}^M p_{P,m}\delta_m \leq \sum_{m=1}^M p_{P,m}\cdot\sqrt{\frac{6\log(4Mn/\alpha)}{n p_{P,m}}}
\leq \sqrt{\frac{6M\log(4Mn/\alpha)}{n }} = \frac{\eps\sqrt{6\alpha}}{9},\end{equation}
where the next-to-last step holds since $\sum_{m=1}^M \sqrt{p_{P,m}} \leq \sqrt{M}\sqrt{\sum_{m=1}^M p_{P,m}} = \sqrt{M}$.

Next, we will assume for now that $\eps\leq 1$. Let
\[a^\circ_m = \min\left\{ 1, (1-\eps) \cdot a^*_{P,m} +  \delta_m\sqrt{\alpha}\right\}\]
where $\ba^*_P$ is defined as in~\eqref{eqn:a_star}.
Now fix any $m$. If $a^\circ_m=1$ then $\ell(\tilde\pi_m,a^\circ_m)=0$. If not, then $a^\circ_m= (1-\eps) \cdot a^*_{P,m} +  \delta_m\sqrt{\alpha}$,
and we now derive a bound on  $\ell(\tilde\pi_m,a^\circ_m)$.
By definition of $\delta_m$, we have
\[|\tilde\pi_m-\pi_{P,m}|\leq\delta_m\cdot \sqrt{8\min\{\pi_{P,m},1-\pi_{P,m}\}}+2\delta_m^2.\]
Applying 
Lemma~\ref{lem:ell_sqrt_bound}, we have
\begin{align*}
\ell(\tilde\pi_m,a^\circ_m) 
&\leq \ell(\pi_{P,m},a^\circ_m - \delta_m\sqrt{\alpha}) + \frac{\delta_m}{\sqrt{\alpha}}\\
&= \ell(\pi_{P,m},(1-\eps) a^*_{P,m}) + \frac{\delta_m}{\sqrt{\alpha}}\\
&= \ell(\pi_{P,m},(1-\eps) a^*_{P,m} + \eps\cdot 0) + \frac{\delta_m}{\sqrt{\alpha}}\\
&\leq (1-\eps)\cdot  \ell(\pi_{P,m},a^*_{P,m}) + \eps\cdot\ell(\pi_{P,m}, 0) + \frac{\delta_m}{\sqrt{\alpha}}\\
&\leq \ell(\pi_{P,m},a^*_{P,m}) + \eps + \frac{\delta_m}{\sqrt{\alpha}},
\end{align*}
since $\ell(t,a)$ is convex in $a$, and bounded by $1$.
Summing over all $m$, and applying~\eqref{eqn:delta_eps},
\begin{multline}\label{eqn:length_Ecal_update2}\sum_{m=1}^M p_{P,m}\ell(\tilde\pi_m,a^\circ_m) \leq \sum_{m=1}^M p_{P,m} \ell(\pi_{P,m},a^*_{P,m})+ \left(1+\frac{\sqrt{6}}{9}\right)\eps\\
 \leq  L_\alpha(\Pi_P) + \sqrt{\frac{2\Delta_P(\Xcal_{1:M})}{\alpha}} + \left(1+\frac{\sqrt{6}}{9}\right)\eps,\end{multline}
where the last step applies Lemma~\ref{lem:upperbd_oracle}
along with the calculation $\Ep{P_X}{\leb(C^*_P(X))} = \sum_{m=1}^M p_{P,m} \ell(\pi_{P,m},a^*_{P,m})$ from the proof of that lemma. 

Next we will need to relate $\sum_{m=1}^M p_{P,m}\ell(\tilde\pi_m,a^\circ_m)$ to $\sum_{m=1}^M p_{P,m}\ell(\tilde\pi_m,\tilde a_m)$.
Recalling the definition of $\tilde\ba$ in~\eqref{eqn:a_tilde}, we see that by optimality of $\tilde\ba$,
\begin{equation}\label{eqn:length_Ecal_update3}\textnormal{If $\sum_{m=1}^M\tilde p_m a^\circ_m\leq \alpha$, then\  $\sum_{m=1}^M p_{P,m}\ell(\tilde\pi_m,\tilde a_m)\leq \sum_{m=1}^M p_{P,m}\ell(\tilde\pi_m,a^\circ_m)$.}\end{equation}
We now turn to verifying that $\sum_{m=1}^M\tilde p_m a^\circ_m\leq \alpha$, to ensure that $\ba^\circ$ is feasible for the optimization problem~\eqref{eqn:a_tilde}.
First, we have
\begin{align*}
\sum_{m=1}^M p_{P,m} a^\circ_m
&\leq \sum_{m=1}^M p_{P,m} \left((1-\eps) \cdot a^*_{P,m} +  \delta_m\sqrt{\alpha}\right)\\
&= (1-\eps)\sum_{m=1}^Mp_{P,m}a^*_{P,m} + \sqrt{\alpha}\sum_{m=1}^M p_{P,m}\delta_m\\
&\leq (1-\eps)\alpha + \sqrt{\alpha}\sum_{m=1}^M p_{P,m}\delta_m\textnormal{\quad by definition of $\ba^*_P$~\eqref{eqn:a_star}}\\
&\leq (1-\eps)\alpha + \sqrt{\alpha}\cdot \frac{\eps\sqrt{6\alpha}}{9}\textnormal{\quad by~\eqref{eqn:delta_eps}}\\
&= \alpha-\alpha\eps\left(1 - \frac{\sqrt{6}}{9}\right).
\end{align*}
Next, applying~\eqref{eqn:check_smoothing_p}, along with the fact that $a^\circ_m\leq 1$ by construction, we have
\begin{align*}
&\sum_{m=1}^M\tilde p_m a^\circ_m
\leq \sum_{m=1}^M \left( p_{P,m} + \sqrt{p_{P,m}\cdot \frac{18\log(4Mn/\alpha)}{n}}+\frac{12\log(4Mn/\alpha)}{n}\right) \cdot a^\circ_m\\
&\leq \sum_{m=1}^M p_{P,m} a^\circ_m+ \left(\sum_{m=1}^M\sqrt{p_{P,m} a^\circ_m}\right)\cdot \sqrt{\frac{18\log(4Mn/\alpha)}{n}}+\frac{12M\log(4Mn/\alpha)}{n}\\
&\leq \sum_{m=1}^M p_{P,m} a^\circ_m+ \sqrt{\sum_{m=1}^Mp_{P,m} a^\circ_m}\cdot \sqrt{\frac{18M\log(4Mn/\alpha)}{n}}+\frac{12M\log(4Mn/\alpha)}{n}\\
&\leq\alpha-\alpha\eps\left(1 - \frac{\sqrt{6}}{9}\right) + \sqrt{\alpha}\cdot \sqrt{\frac{18M\log(4Mn/\alpha)}{n}}+\frac{12M\log(4Mn/\alpha)}{n} \\
&\leq \alpha,
\end{align*}
where the last step plugs in the definition of $\eps$ along with the assumption that $\eps\leq 1$. Thus we have proved that $\sum_{m=1}^M\tilde p_m a^\circ_m\leq \alpha$.
This means that
$\ba^\circ$ is feasible for the optimization problem~\eqref{eqn:a_tilde}. Combining~\eqref{eqn:length_Ecal_update2} with~\eqref{eqn:length_Ecal_update3}, we have therefore proved that on the event $\Ecal_2$, 
\[\sum_{m=1}^M p_{P,m}\ell(\tilde\pi_m,\tilde a_m)
 \leq  L_\alpha(\Pi_P) + \sqrt{\frac{2\Delta_P(\Xcal_{1:M})}{\alpha}} +\left(1+\frac{\sqrt{6}}{9}\right)\eps.\]
After combining with~\eqref{eqn:length_Ecal_update1}, we therefore have
\[
\EE{\leb(\Ch_n(X_{n+1}))}\leq L_\alpha(\Pi_P) + \sqrt{\frac{2\Delta_P(\Xcal_{1:M})}{\alpha}} + \left(1+\frac{\sqrt{6}}{9}\right)\eps +\frac{\alpha}{n},\]
as long as $\eps\leq 1$. Furthermore, the assumption $\eps\leq 1$ ensures that $\log(4Mn/\alpha)\leq 2\log n$, and so plugging in the definition of $\eps$, we have proved that
\[
\EE{\leb(\Ch_n(X_{n+1}))}\leq L_\alpha(\Pi_P) + \sqrt{\frac{2\Delta_P(\Xcal_{1:M})}{\alpha}} + c'\sqrt{\frac{M\log n}{\alpha n}}\]
for a sufficiently large universal constant $c'$. If instead we have $\eps>1$, then we have
$\EE{\leb(\Ch_n(X_{n+1}))}\leq1 \leq c''\sqrt{\frac{M\log n}{\alpha n}} $
for a sufficiently large universal constant $c''$.
Taking $c = \max\{c',c''\}$, we have completed the proof of the theorem. 

\subsection{Proof of Corollary~\ref{cor:upperbd}}
To help with the proof, we begin by adapting our previous notation to the setting of a data-dependent partition.
The partition $\widehat{\Xcal}^{\Rcal}_{1:M}$ is a function of the first half of the data,
i.e., data points $\{(X_i,Y_i)\}_{i=1,\dots,\lfloor \frac{n}{2}\rfloor}$. Conditioning on these data points, we define
\[p_{P,m} = \Ppst{P_X}{X\in \widehat{\Xcal}^{\Rcal}_m}{\{(X_i,Y_i)\}_{i=1,\dots,\lfloor \frac{n}{2}\rfloor}}\]
and
\[\pi_{P,m} =   \Epst{P_X}{\pi_P(X)}{X\in\widehat{\Xcal}^{\Rcal}_m;\{(X_i,Y_i)\}_{i=1,\dots,\lfloor \frac{n}{2}\rfloor}},\]
where we should interpret this to mean that the partition $\widehat{\Xcal}^{\Rcal}_{1:M}$ is treated 
as fixed, and the probability and expectation are calculated with respect to an independent draw $X\sim P_X$.
Similarly, we will write
\[\Delta\big(\widehat{\Xcal}^{\Rcal}_{1:M}\big) 
 = \Epst{P_X}{|\pi_P(X) - \pi_{P,m(X)}|}{\{(X_i,Y_i)\}_{i=1,\dots,\lfloor \frac{n}{2}\rfloor}}.\]
These quantities are now functions of the data points $\{(X_i,Y_i)\}_{i=1,\dots,\lfloor \frac{n}{2}\rfloor}$.

Next we apply Theorem~\ref{thm:upperbd}. Specifically,
we will condition on the data points $\{(X_i,Y_i)\}_{i=1,\dots,\lfloor \frac{n}{2}\rfloor}$ used to choose the partition
so that the partition can be treated as fixed, and will
 apply Theorem~\ref{thm:upperbd} with $\lceil \frac{n}{2}\rceil\geq 2$
in place of $n$ (i.e., we apply the theorem to data points $i=\lfloor \frac{n}{2}\rfloor +1,\dots,n$ in place of $i=1,\dots,n$). 
This proves that
\[\Ppst{(X_i,Y_i)\iidsim P}{\pi_P(X_{n+1})\in \Ch^{\Rcal}_n(X_{n+1})}{\{(X_i,Y_i)\}_{i=1,\dots,\lfloor \frac{n}{2}\rfloor}} \geq 1- \alpha\]
and
\begin{multline*}\Epst{(X_i,Y_i)\iidsim P}{\leb(\Ch^{\Rcal}_n(X_{n+1}))}{\{(X_i,Y_i)\}_{i=1,\dots,\lfloor \frac{n}{2}\rfloor}}
\\ \leq L_\alpha(\Pi_P) + \sqrt{\frac{2\Delta_P(\widehat{\Xcal}^{\Rcal}_{1:M})}{\alpha}}+ c\sqrt{\frac{M\log\lceil\frac{n}{2}\rceil}{\alpha \lceil\frac{n}{2}\rceil}}.\end{multline*}
Marginalizing over the data points $\{(X_i,Y_i)\}_{i=1,\dots,\lfloor \frac{n}{2}\rfloor}$ used to choose the partition, 
we therefore have
\[\Pp{(X_i,Y_i)\iidsim P}{\pi_P(X_{n+1})\in \Ch^{\Rcal}_n(X_{n+1})} \geq 1- \alpha\]
and after applying Jensen's inequality,
\[\Ep{(X_i,Y_i)\iidsim P}{\leb(\Ch^{\Rcal}_n(X_{n+1}))} \leq L_\alpha(\Pi_P) + \sqrt{\frac{2\Ep{(X_i,Y_i)\iidsim P}{\Delta_P(\widehat{\Xcal}^{\Rcal}_{1:M})}}{\alpha}}+ c\sqrt{\frac{M\log\lceil\frac{n}{2}\rceil}{\alpha \lceil\frac{n}{2}\rceil}}.\]
Note that these bounds hold for any distribution $P$. The first statement therefore immediately verifies that 
$\Ch^{\Rcal}_n$
 satisfies the distribution-free coverage property~\eqref{eqn:distr_free_confidence_binary}.
We now complete the proof of the bound on length. 
Write $\textnormal{mid}_m = \frac{m(X) - \frac{1}{2}}{M}$, the midpoint of the range $[\frac{m-1}{M},\frac{m}{M})$ of estimated probabilities
that define the region $\widehat{\Xcal}^{\Rcal}_m$. We have
\begin{align*}
&\Delta\big(\widehat{\Xcal}^{\Rcal}_{1:M}\big) 
 = \Epst{P_X}{|\pi_P(X) - \pi_{P,m(X)}|}{\{(X_i,Y_i)\}_{i=1,\dots,\lfloor \frac{n}{2}\rfloor}}\\
&=\sum_{m=1}^M p_{P,m}  \Epst{P_X}{|\pi_P(X) - \pi_{P,m(X)}|}{X\in\widehat{\Xcal}^{\Rcal}_m;\{(X_i,Y_i)\}_{i=1,\dots,\lfloor \frac{n}{2}\rfloor}}\\
&\leq\sum_{m=1}^M p_{P,m}  \left(\left|\pi_{P,m} - \textnormal{mid}_m\right| + \Epst{P_X}{\left|\pi_P(X) - \textnormal{mid}_m\right|}{X\in\widehat{\Xcal}^{\Rcal}_m;\{(X_i,Y_i)\}_{i=1,\dots,\lfloor \frac{n}{2}\rfloor}}\right)\\
&\leq 2\sum_{m=1}^M p_{P,m}\Epst{P_X}{\left|\pi_P(X) - \textnormal{mid}_m\right|}{X\in\widehat{\Xcal}^{\Rcal}_m;\{(X_i,Y_i)\}_{i=1,\dots,\lfloor \frac{n}{2}\rfloor}}\\
&= 2\Epst{P_X}{\left|\pi_P(X) -\textnormal{mid}_{m(X)}\right|}{\{(X_i,Y_i)\}_{i=1,\dots,\lfloor \frac{n}{2}\rfloor}},
\end{align*}
where the next-to-last step holds since, for each $m$,
\begin{multline*}
\left|\pi_{P,m} - \textnormal{mid}_m\right|
= \left|  \Epst{P_X}{\pi_P(X)}{X\in\widehat{\Xcal}^{\Rcal}_m;\{(X_i,Y_i)\}_{i=1,\dots,\lfloor \frac{n}{2}\rfloor}} - \textnormal{mid}_m\right|\\
\leq  \Epst{P_X}{\left|\pi_P(X) - \textnormal{mid}_m\right|}{X\in\widehat{\Xcal}^{\Rcal}_m;\{(X_i,Y_i)\}_{i=1,\dots,\lfloor \frac{n}{2}\rfloor}},
\end{multline*}
by Jensen's inequality. Next, by definition of the partition $\widehat{\Xcal}^{\Rcal}_{1:M}$, we have
\[\left|\pi_P(X) - \textnormal{mid}_{m(X)}\right| \leq \left|\pi_P(X) - \widehat\pi^{\Rcal}_{\lfloor \frac{n}{2}\rfloor}(X)\right| + \frac{1}{2M}\]
almost surely. Therefore,
\begin{align*}
\EE{\Delta\big(\widehat{\Xcal}^{\Rcal}_{1:M}\big) }
&\leq \EE{\frac{1}{M} + 2\Epst{P_X}{\left|\pi_P(X) -  \widehat\pi^{\Rcal}_{\lfloor \frac{n}{2}\rfloor}(X)\right|}{\{(X_i,Y_i)\}_{i=1,\dots,\lfloor \frac{n}{2}\rfloor}}}\\
&=\frac{2}{M} + 2\Delta_{\lfloor\frac{n}{2}\rfloor,P}(\Rcal).
\end{align*}
Combining everything, we have
\[\Ep{(X_i,Y_i)\iidsim P}{\leb(\Ch^{\Rcal}_n(X_{n+1}))} \leq L_\alpha(\Pi_P) + \sqrt{\frac{2\left(\frac{1}{M} + 2\Delta_{\lfloor\frac{n}{2}\rfloor,P}(\Rcal)\right)}{\alpha}}+ c\sqrt{\frac{M\log\lceil\frac{n}{2}\rceil}{\alpha \lceil\frac{n}{2}\rceil}}.\]
Plugging in our choice of $M$, we have proved the desired bound when the universal constant $c'$ is chosen appropriately.

\subsection{Proofs of supporting lemmas}

\begin{proof}[Proof of Lemma~\ref{lem:ell_sqrt_bound}]
Without loss of generality we can take $t\leq\frac{1}{2}$. Furthermore, if $t'>\frac{1}{2}$ then $\ell(t',a+r\delta)\leq\ell(\frac{1}{2},a+r\delta)$,
so it suffices to consider only the case where $t'\leq\frac{1}{2}$ as well. Finally, if $t'<t$ then $\ell(t',a+r\delta)\leq \ell(t,a+r\delta)\leq \ell(t,a)$
since $\ell(t,a)$ is nondecreasing in $t\in[0,\frac{1}{2}]$
and nonincreasing in $a$,
so the result is trivial in this case. Therefore, from this point on we only need to prove the result for $0\leq t\leq t'\leq \frac{1}{2}$.

Now we split into cases.\begin{itemize}
\item If $t=a=0$, then $\ell(t,a) = \ell(0,0) = 0$ and
\[\ell(t',a+r\delta) \leq \frac{t'}{2(a+r\delta)} \leq \frac{2\delta^2}{2\cdot r\delta} = \frac{\delta}{r},\]
where the first step holds since $\ell(t,a)\leq \frac{t}{2a}$ for all $t,a$. 
\item If $a\geq \frac{1}{2}$, then
\begin{align*}\ell(t',a+\delta r) - \ell(t,a) - \frac{\delta}{r}
&= 2(1-a-\delta r)t' - 2(1-a)t - \frac{\delta}{r}\\
&= 2(1-a)(t'-t) - 2\delta r t' - \frac{\delta}{r}\\
&\leq 2(1-a)(t'-t) - \delta\sqrt{8t'} \textnormal{\quad since $2xy\leq x^2+y^2$ for all $x,y$}\\
&\leq t'-t - \delta\sqrt{8t'} \textnormal{\quad since $a\geq \frac{1}{2}$ implies $2(1-a)\leq 1$}\\
&\leq0,\end{align*}
where the last step holds by assumption on $|t'-t|$.
\item If $t\leq a <\frac{1}{2}$, then $\ell(t,a) = \frac{t}{2a}$, and we also know that $\ell(t',a+\delta r)\leq \frac{t'}{2(a+\delta r)}$ as before. Therefore,
\begin{align*}\ell(t',a+\delta r) - \ell(t,a) 
& \leq  \frac{t'}{2(a+\delta r)} -  \frac{t}{2a} \\
&\leq
\frac{t + \delta\sqrt{8t} + 2\delta^2}{2(a+\delta r)} - \frac{t}{2a}\textnormal{\quad by assumption on $|t'-t|$}\\
&\leq \frac{t + \delta r \cdot \frac{t}{a} + \frac{\delta}{r}\cdot 2a  + 2\delta^2}{2(a+\delta r)} - \frac{t}{2a}\textnormal{\quad since $2xy\leq x^2+y^2$ for all $x,y$}\\
&= \frac{ (a+\delta r)\cdot \left(\frac{t}{a} + \frac{2\delta}{r}\right)}{2(a+\delta r)} - \frac{t}{2a}= \frac{\delta}{r} .
\end{align*}
\item If $a < t$ and $a+\delta r<t'$, then
\begin{align*}\ell(t',a+\delta r) - \ell(t,a) - \frac{\delta}{r} 
&=
\left(1 - \frac{a+\delta r}{2t'}\right) - \left(1 - \frac{a}{2t}\right) - \frac{\delta}{r}\\
& = \frac{a\cdot (t'-t) }{2tt'} - \frac{\delta r + 2t'\delta/r}{2t'}\\
&\leq\frac{t'-t }{2t'} - \frac{\delta r + 2t'\delta/r}{2t'}\textnormal{\quad since $a<t$}\\
& \leq \frac{t'-t - \delta\sqrt{8t'}}{2t'}\textnormal{\quad since $2xy\leq x^2+y^2$ for all $x,y$}\\
&\leq 0,
 \end{align*}
where the last step holds by assumption on $|t'-t|$.
\item If $a<t$ and $a+\delta r\geq t'$, then $ \ell(t,a) =1 - \frac{a}{2t} \geq \frac{1}{2}$ and $\ell(t',a+\delta r)\leq \frac{t'}{2(a+\delta r)} \leq \frac{1}{2}$, and so
\[\ell(t',a+\delta r) - \ell(t,a) - \frac{\delta}{r} \leq 0.\]
\end{itemize}
We have now verified that the bound holds in all cases, which completes the proof.
\end{proof}

\begin{proof}[Proof of Lemma~\ref{lem:chernoff}]
Fix any $m$. We will show that the bounds hold for this choice of $m$ with probability at least $1-\frac{\alpha}{Mn}$,
and then the lemma follows by applying the union bound.

The multiplicative Chernoff bound~\citep[Theorem 2.3(b,c)]{mcdiarmid1998concentration} states that, for 
any integer $N\geq1$ and any $t\in[0,1]$, for a random variable $B\sim\textnormal{Binomial}(N,t)$, it holds for all $\delta>0$ that
\begin{equation}\label{eqn:chernoff1}
\PP{ B\geq Nt - \sqrt{2Nt\log(1/\delta)} }\geq 1-\delta\end{equation}
and
\begin{equation}\label{eqn:chernoff2}\PP{B \leq Nt + \sqrt{3Nt\log(1/\delta)}\vee 3\log(1/\delta)}\geq 1-\delta.\end{equation}

Now fix any $m$.
We will prove that the statements~\eqref{eqn:check_smoothing_p} and~\eqref{eqn:check_smoothing_pi} hold with probability at least $1-\frac{\alpha}{Mn}$ for this $m$.

First, we have $n\widehat p_m\sim\textnormal{Binomial}(n,p_{P,m})$, and therefore, 
applying~\eqref{eqn:chernoff1} with $N=n$, $t=p_{P,m}$, 
and $\delta= \frac{\alpha}{4Mn}$, with probability at least $1-\frac{\alpha}{4Mn}$ it holds that
\begin{equation}\label{eqn:p_hat_chernoff1}\widehat p_m \geq p_{P,m} - \sqrt{p_{P,m}\cdot \frac{2\log(4Mn/\alpha)}{n}}.\end{equation}
Furthermore,
applying~\eqref{eqn:chernoff2} with $N=n$, $t=p_{P,m}$, 
and $\delta= \frac{\alpha}{4Mn}$, with probability at least $1-\frac{\alpha}{4Mn}$ it holds that
\begin{equation}\label{eqn:p_hat_chernoff2}\widehat p_m \leq p_{P,m} + \sqrt{p_{P,m}\cdot \frac{3\log(4Mn/\alpha)}{n}}\vee \frac{3\log(4Mn/\alpha)}{n}.\end{equation}

Now assume that~\eqref{eqn:p_hat_chernoff1} and~\eqref{eqn:p_hat_chernoff2}
both hold. We will show that this implies~\eqref{eqn:check_smoothing_p} for this $m$.
First, by~\eqref{eqn:p_hat_chernoff2}, we have
\begin{multline*}
\tilde p_m 
=   \widehat p_m + \sqrt{\widehat p_m\cdot \frac{3\log(4Mn/\alpha)}{n}}+\frac{3\log(4Mn/\alpha)}{n}\\
\leq p_{P,m} + \sqrt{p_{P,m}\cdot \frac{18\log(4Mn/\alpha)}{n}} +\frac{12\log(4Mn/\alpha)}{n}.\end{multline*}
Next, by~\eqref{eqn:p_hat_chernoff1}, we have
\[
p_{P,m} \leq \widehat p_m + \sqrt{\widehat p_m\cdot \frac{2\log(4Mn/\alpha)}{n}}+\frac{2\log(4Mn/\alpha)}{n}
 \leq \tilde p_m - \frac{p_{P,m}}{n-1}.
\]
where the last step holds since $n\geq2$ and so $\frac{\log(4Mn/\alpha)}{n}\geq \frac{\log(4n)}{n} \geq \frac{1}{n-1}\geq \frac{p_{P,m}}{n-1}$.
Thus we have
\[p_{P,m}\leq \tilde p_m\left(1-\frac{1}{n}\right).\]
This verifies that~\eqref{eqn:check_smoothing_p} holds for this value of $m$.

Now we turn to~\eqref{eqn:check_smoothing_pi}.  We will condition on $\widehat p_m$.
We split into cases:
\begin{itemize}
\item First suppose $\pi_{P,m}$ satisfies
\[\pi_{P,m} + \sqrt{\pi_{P,m}\cdot \frac{3\log(4Mn/\alpha)}{n\widehat p_m}}\vee \frac{3\log(4Mn/\alpha)}{n\widehat p_m} \leq \frac{1}{2}.\] 
Then $\widehat p_m>0$, and conditional on $\widehat p_m$, we have $n\widehat p_m\widehat\pi_m\sim\textnormal{Binomial}(n\widehat p_m,\pi_{P,m})$.
Therefore, applying~\eqref{eqn:chernoff1} with $N=n\widehat p_m$, $t=\pi_{P,m}$, and $\delta=\frac{\alpha}{4Mn}$, 
with probability at least $1-\frac{\alpha}{4Mn}$ it holds that
\begin{equation}\label{eqn:pi_hat_chernoff1}\widehat\pi_m \geq \pi_{P,m} - \sqrt{\pi_{P,m}\cdot \frac{2\log(4Mn/\alpha)}{n\widehat p_m}}.\end{equation}
And,  applying~\eqref{eqn:chernoff2} with $N=n\widehat p_m$, $t=\pi_{P,m}$, and $\delta=\frac{\alpha}{4Mn}$, 
with probability at least $1-\frac{\alpha}{4Mn}$ it holds that
\begin{equation}\label{eqn:pi_hat_chernoff2}\widehat\pi_m \leq \pi_{P,m} + \sqrt{\pi_{P,m}\cdot \frac{3\log(4Mn/\alpha)}{n\widehat p_m}}\vee \frac{3\log(4Mn/\alpha)}{n\widehat p_m}.\end{equation}

Now assume that~\eqref{eqn:p_hat_chernoff1},~\eqref{eqn:pi_hat_chernoff1}, and~\eqref{eqn:pi_hat_chernoff2} all hold.
By definition of this case, the bound~\eqref{eqn:pi_hat_chernoff2} immediately implies that $\widehat\pi_m\leq \frac{1}{2}$ and therefore by definition, $\tilde\pi_m\leq \frac{1}{2}$ also.
Applying~\eqref{eqn:pi_hat_chernoff1},
\[\pi_{P,m}
\leq \widehat\pi_m+\sqrt{\widehat\pi_m\cdot \frac{2\log(4Mn/\alpha)}{n\widehat p_m}}+\frac{2\log(4Mn/\alpha)}{n\widehat p_m},\]
and $\pi_{P,m}\leq\frac{1}{2}$ by definition of this case, so
\[\tilde\pi_m= \min\left\{\frac{1}{2}, \widehat\pi_m+  \sqrt{\widehat\pi_m\cdot \frac{2\log(4Mn/\alpha)}{n\widehat p_m}} + \frac{2\log(4Mn/\alpha)}{n\widehat p_m}\right\}  \geq\pi_{P,m}.\]
By applying~\eqref{eqn:pi_hat_chernoff2}, we furthermore have
\begin{multline*}
\tilde\pi_m
\leq \widehat\pi_m+  \sqrt{\widehat\pi_m\cdot \frac{2\log(4Mn/\alpha)}{n\widehat p_m}} + \frac{2\log(4Mn/\alpha)}{n\widehat p_m}\\
\leq \pi_{P,m} +  \sqrt{\pi_{P,m}\cdot \frac{12\log(4Mn/\alpha)}{n\widehat p_m}} + \frac{8\log(4Mn/\alpha)}{n\widehat p_m}.\end{multline*}
 Next, if $\widehat p_m \geq \frac{2}{3}p_{P,m}$ then this last bound yields
\[\tilde\pi_m \leq  \pi_{P,m} +  \sqrt{\pi_{P,m}\cdot \frac{18\log(4Mn/\alpha)}{n p_{P,m}}} + \frac{12\log(4Mn/\alpha)}{n p_{P,m}},\]
while if instead $\widehat p_m < \frac{2}{3}p_{P,m}$ then by~\eqref{eqn:p_hat_chernoff1} we can see that $p_{P,m} \leq \frac{18\log(4Mn/\alpha)}{n}$, 
and so
\[\tilde\pi_m \leq \frac{1}{2} \leq \frac{9\log(4Mn/\alpha)}{n p_{P,m}} .\] Either way, then, we have verified that
 the statement~\eqref{eqn:check_smoothing_pi} holds, as desired.

\item Next,  suppose $\pi_{P,m}$ satisfies
\[\pi_{P,m}\leq \frac{1}{2} < \pi_{P,m} + \sqrt{\pi_{P,m}\cdot \frac{3\log(4Mn/\alpha)}{n\widehat p_m}}\vee \frac{3\log(4Mn/\alpha)}{n\widehat p_m} .\] 
If $\widehat p_m>0$, then conditional on $\widehat p_m$, we have $n\widehat p_m(1-\widehat\pi_m)\sim\textnormal{Binomial}(n\widehat p_m,1-\pi_{P,m})$.
Therefore, applying~\eqref{eqn:chernoff1} with $N=n\widehat p_m$, $t=1-\pi_{P,m}$, and $\delta=\frac{\alpha}{4Mn}$, 
with probability at least $1-\frac{\alpha}{4Mn}$ it holds that
\begin{equation}\label{eqn:pi_hat_chernoff2_}(1-\widehat\pi_m) \geq (1-\pi_{P,m}) - \sqrt{(1-\pi_{P,m})\cdot \frac{2\log(4Mn/\alpha)}{n\widehat p_m}}.\end{equation}
Furthermore the bound~\eqref{eqn:pi_hat_chernoff1} holds with probability at least $1-\frac{\alpha}{4Mn}$ as above.
(If instead $\widehat p_m=0$, then the bounds~\eqref{eqn:pi_hat_chernoff1} and~\eqref{eqn:pi_hat_chernoff2_} hold trivially.)

Now assume that~\eqref{eqn:p_hat_chernoff1},~\eqref{eqn:pi_hat_chernoff1},  and~\eqref{eqn:pi_hat_chernoff2_} all hold.
If $\widehat\pi_m\leq \frac{1}{2}$, then by definition we have $\tilde\pi_m\leq \frac{1}{2}$ also.
Furthermore, applying~\eqref{eqn:pi_hat_chernoff1} proves that  $\pi_{P,m}\leq \tilde\pi_m$ exactly as for the previous case.
If instead $\widehat\pi_m> \frac{1}{2}$, then~\eqref{eqn:pi_hat_chernoff2_}
implies that
\[ \frac{1}{2}\leq (1-\pi_{P,m}) \leq (1-\widehat\pi_m) + \sqrt{(1-\widehat\pi_m)\cdot \frac{2\log(4Mn/\alpha)}{n\widehat p_m}} +  \frac{2\log(4Mn/\alpha)}{n\widehat p_m},\]
which therefore means that
\[\tilde\pi_m
=\max\left\{\frac{1}{2}, \widehat\pi_m-  \sqrt{(1-\widehat\pi_m)\cdot \frac{2\log(4Mn/\alpha)}{n\widehat p_m}} - \frac{2\log(4Mn/\alpha)}{n\widehat p_m}\right\} = \frac{1}{2}.\]
We have therefore established that $\pi_{P,m}\leq \tilde\pi_m\leq \frac{1}{2}$ under either scenario.
Next, 
\[|\tilde\pi_m - \pi_{P,m}| \leq \frac{1}{2} - \pi_{P,m} < \sqrt{\pi_{P,m}\cdot \frac{3\log(4Mn/\alpha)}{n\widehat p_m}}\vee \frac{3\log(4Mn/\alpha)}{n\widehat p_m} \]
by definition of this case.
As for the previous case, we can therefore verify~\eqref{eqn:check_smoothing_pi} by considering the two possibilities
 $\widehat p_m \geq \frac{2}{3}p_{P,m}$ and  $\widehat p_m < \frac{2}{3}p_{P,m}$, and applying~\eqref{eqn:p_hat_chernoff1}.

\item The case that $\pi_{P,m}>\frac{1}{2}$ is treated analogously.
\end{itemize}

\noindent We have now verified that, for each $m$, with probability at least $1-\frac{\alpha}{Mn}$, the bounds~\eqref{eqn:check_smoothing_p} and~\eqref{eqn:check_smoothing_pi} 
both hold, which completes the proof of the lemma.
\end{proof}
\end{document}